\documentclass[12pt]{amsart}

\usepackage{amsfonts}

\setlength{\topmargin}{-0.5cm}
\setlength{\textwidth}{15cm}
\setlength{\textheight}{22.6cm}
\setlength{\topmargin}{-0.25cm}
\setlength{\headheight}{1em}
\setlength{\headsep}{0.5cm}
\setlength{\oddsidemargin}{0.40cm}
\setlength{\evensidemargin}{0.40cm}
\usepackage[alphabetic]{amsrefs}
\usepackage{amscd}
\usepackage{amsmath}
\usepackage{amssymb}
\usepackage{amsthm}
\usepackage{bigdelim}
\usepackage{color}
\usepackage{enumerate}
\usepackage{mathrsfs}
\usepackage{multirow}
\usepackage[all]{xy}
\usepackage{ulem}




\def\Q{{\mathbb{Q}}}

\theoremstyle{plain}
\newtheorem{thm}{Theorem}[section]

\newtheorem{prop}[thm]{Proposition}

\newtheorem{mainthm}{Main Theorem}

\newtheorem{lem}[thm]{Lemma}
\theoremstyle{definition}

\theoremstyle{remark}
\newtheorem{rem}[thm]{Remark}

\newtheorem{notation}[thm]{Notation}

\begin{document}

\title{Good minimal models with nowhere vanishing holomorphic $1$-forms}

\author {Feng Hao}
\address{School of Mathematics, Shandong University, Jinan 250100, P.R.China,}
\email {feng.hao@sdu.edu.cn}

\author {Zichang Wang}
\address{School of Mathematical Science, University of Science and Technology of China, Hefei 230026, P.R.China.}
\email {wangzichang@mail.ustc.edu.cn}

\author {Lei Zhang}
\address{School of Mathematical Science, University of Science and Technology of China, Hefei 230026, P.R.China.}
\email {zhlei18@ustc.edu.cn}

\maketitle

\begin{abstract}
Popa and Schnell show that any holomorphic 1-form on a smooth projective variety of general type has zeros. In this article, we show that a smooth good minimal model has a holomorphic 1-form without zero if and only if it admits an analytic fiber bundle structure over a positive dimensional abelian variety.
\end{abstract}

\bigskip

\section{Introduction}

Holomorphic 1-forms as core objects in the study of algebraic geometry encode much geometric information of irregular varieties. As their dual objects, holomorphic vector fields were intensely studied by Baum, Bott, Carrell, Howard, Lieberman, Matsushima, etc. (see e.g. \cite{Bot67, Mat69, BaBo72, How72, CaLi73, Car74}) around 1970. In particular, a recent result by  Amor\'os, Manjar\'in and Nicolau \cite{Amorós2012} shows that the existence of nowhere vanishing holomorphic vector fields gives a strong structural information of a compact K\"ahler manifold.

We also expect that the existence of nowhere vanishing holomorphic 1-forms restricts varieties a lot. In fact, a celebrated result of Popa and Schnell \cite{popa2014kodaira} shows that any holomorphic 1-form on a smooth complex projective variety of general type has zeros (See also \cite{zhang1997global,hacon2005holomorphic,luo2005holomorphic} for pioneer works on this result).

A fine research on smooth projective varieties admitting holomorphic 1-forms without zero was initiated by Schreieder \cite{schreieder2021zeros}, in which Schreieder studies how holomorphic 1-forms without zeros affect the topology of the compact K\"ahler manifolds, and classifies smooth projective surfaces which admit holomorphic 1-forms without zeros (see also \cite{hao2021equality,hao2021holomorphic,kotschick2022holomorphic,chen2023nowhere,hao2024nowhere} for related results and higher dimensional generalizations). We would like to mention that a recent related result by Catanese \cite{catanese2024manifoldstrivialchernclasses} gives a much finer study on the compact K\"ahler manifolds admitting nowhere vanishing holomorphic 1-forms arising from coframed cotangent bundles. In this article, we give a  thorough study on the  classification of good minimal models admitting nowhere vanishing holomorphic 1-forms. When this article is in preparation, we notice that Church has claimed similar results using different approach in his article \cite{church2024vanishing1formsvarietiesadmitting} very recently.

We work on the complex number field $\mathbb{C}$.  By a \textit{fibration}, we mean a projective morphism $f:X\to Y$ of normal quasi-projective varieties with $f_{*}\mathcal{O}_{X}=\mathcal{O}_{Y}$.

\begin{mainthm}\label{thm:main1}

Let $X$ be a smooth projective variety. Suppose that

(i) $X$ is a good minimal model, that is, $K_X$ is semi-ample; and

(ii) there exists a holomorphic 1-form $\omega \in H^{0}(X,\Omega^{1}_{X})$ such that the zero locus $Z(\omega)=\emptyset$.

Then $X$ admits an isotrivial smooth morphism to a positive dimensional abelian variety $A$. More precisely, there is a finite \'etale cover $A'\to A$, such that $X\times_A A'$ is isomorphic to a product $Y\times A'$.



\end{mainthm}

\medskip

This result is sharp, in the sense that for a non-minimal smooth projective variety $X$, the existence of holomorphic 1-forms without zero on $X$ does not necessarily imply that $X$ admits a smooth morphism to a positive dimensional abelian variety. A specific example is given in \cite{Schreieder2022ZerosOO} by Schreieder and Yang: The blow-up of $E_1\times E_2\times \mathbb{P}^1$ along the union of two curves $E_1\times \{0\}\times \{0\}$ and $\{0\}\times E_2 \times \infty$, where $E_1, E_2$ denote non-isogeneous elliptic curves.

More precisely, we obtain the following slightly stronger theorem, which directly implies Main Theorem \ref{thm:main1}.

\begin{mainthm}\label{thm:main2}
Let $X$ be a smooth good minimal model. Then the followings are equivalent

(i) $X$ admits holomorphic 1-forms $\omega_1, \ldots , \omega_g \in H^0(X, \Omega^1_X)$ which are linearly independent pointwisely, in particular, their zero loci are empty.

(ii) $X$ admits an isotrivial smooth morphism to an abelian variety $A$ of dimension $\geq g$. More precisely, there is a finite \'etale cover $A'\to A$, such that $X\times_A A'$ is isomorphic to a product $Y\times A'$
\end{mainthm}

In fact, we show the following theorem (Theorem \ref{thm:structure}), which implies the above main theorems by combining with the results in \cite{popa2014kodaira}. The new ingredient in the proof of Theorem \ref{thm:structure} comparing with previous related results is taking the Isom functors of polarized varieties into consideration.

\begin{thm}\label{thm:structure}
Let $f\colon X\to S$ be a fibration of normal projective varieties and $h\colon X\to B$ be a morphism to an abelian variety.
$$\xymatrix{&X\ar[r]^{h}\ar[d]^{f}&B\\
&S&}$$
Suppose that
\begin{itemize}
\item[(i)] $X$ has at most $\Q$-factorial canonical singularities, and the canonical divisor $K_X$ is semi-ample; 
\item[(ii)] for a general fiber $F$ of $f$, $K_F$ is $\mathbb{Q}$-linearly equivalent to $0$; and
\item[(iii)] the restriction $h|_F\colon F\to B$ is surjective.
\end{itemize}
Denote by $g\colon X\to A$ the fibration arising from the Stein factorization of $h\colon X\to B$.
Then
\begin{itemize}
    \item [(1)] $A$ is an abelian variety;
    \item[(2)] there exists an isogeny $A' \to A$ of abelian varieties such that $X\times_A A' \cong Z' \times A',$ where $Z'$ is a fiber of $g$.
\end{itemize}
\end{thm}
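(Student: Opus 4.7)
The plan is to first show in part~(1) that $A$ is an abelian variety, and then to obtain the product structure in part~(2) via the Isom functor of polarized varieties. For (1), since $g\colon X\to A$ is a Stein factorization, $A$ is normal and irreducible and $\pi\colon A\to B$ is finite. A general fiber $F$ of $f$ surjects to $B$ by~(iii); its image in $A$ is a closed irreducible subvariety of $A$ surjecting onto $B$ under the finite map $\pi$, so it has dimension $\dim B=\dim A$ and must equal $A$. Hence $F\to A$ is surjective. Pass to a resolution $\widetilde F\to F$; hypothesis~(ii) combined with the (at worst canonical) singularities of a general fiber gives $\kappa(\widetilde F)=0$, and Kawamata's theorem then implies that $\mathrm{alb}_{\widetilde F}\colon \widetilde F\to\mathrm{Alb}(\widetilde F)$ is surjective with connected fibers. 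The induced surjective morphism $\mathrm{Alb}(\widetilde F)\to A$, together with $\pi$ finite and the rigidity of morphisms of abelian varieties (applied to $\mathrm{Alb}(\widetilde F)\to B$) followed by uniqueness of Stein factorization, forces $A$ itself to be an abelian variety.

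\textbf{The Isom functor setup.} For (2), let $A^\circ\subset A$ be a dense open locus over which $g$ is flat with irreducible normal fibers, and fix a very ample divisor $H$ on $X$; this gives a polarization $H_a=H|_{Z_a}$ on each fiber $Z_a=g^{-1}(a)$. The central object is the relative Isom scheme of polarized pairs
\[
I\;:=\;\underline{\mathrm{Isom}}_{A^\circ\times A^\circ}\!\bigl((p_1^*X^\circ,p_1^*H),\,(p_2^*X^\circ,p_2^*H)\bigr),
\]
which by Matsusaka--Mumford is of finite type and quasi-finite over $A^\circ\times A^\circ$, since automorphism groups of polarized varieties are finite. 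The crux of the proof is to show that $I\to A^\circ\times A^\circ$ is surjective, i.e.\ that the polarized family $(Z_a,H_a)$ is fiberwise isotrivial.

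\textbf{Isotriviality and descent.} Since $K_A=0$, we have $K_{Z_a}=K_X|_{Z_a}$ is semi-ample, and the Iitaka fibrations $Z_a\to W_a$ assemble into a smooth family of canonically polarized varieties $\mathcal{W}\to A^\circ$. Viehweg--Zuo / Kov\'acs hyperbolicity for moduli of canonically polarized varieties forces the moduli map $A^\circ\to\mathcal{M}_{\mathrm{can}}$ to be constant, because $A$ is an abelian variety. A second application of the Isom functor, now to the $K\sim_{\mathbb{Q}}0$ sub-fibers of $Z_a\to W_a$ polarized by the restriction of $H$, trivializes the full polarized family $(Z_a,H_a)$. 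Fixing a reference fiber $(Z_0,H_0)$, an irreducible component of $\underline{\mathrm{Isom}}_{A^\circ}((X^\circ,H),(Z_0\times A^\circ,p_1^*H_0))$ yields an \'etale cover $A^{\circ\prime}\to A^\circ$ carrying a tautological product isomorphism $X^\circ\times_{A^\circ}A^{\circ\prime}\cong Z_0\times A^{\circ\prime}$; purity of the branch locus extends $A^{\circ\prime}$ to an \'etale, hence isogenous, cover $A'\to A$ of abelian varieties, and properness extends the product isomorphism across all of $A'$. The principal obstacle is the second stage of isotriviality---propagating the Viehweg--Zuo / Kov\'acs constancy of the canonical models $\mathcal{W}\to A^\circ$ through to the $K$-trivial sub-fibers---and it is precisely here that the polarized Isom-functor approach, rather than a direct period-map or VHS argument, is essential.
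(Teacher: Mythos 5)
Your part (1) is essentially the paper's argument (Kawamata's Theorem 8.3 applied to the $K$-trivial fiber $F$ surjecting onto $B$), but part (2) has two genuine gaps, and the second is the heart of the theorem. First, the crux you identify --- isotriviality of the polarized family of $g$-fibers $(Z_a,H_a)$ over $A^\circ$ --- is not actually proved by your mechanism. The bases $W_a$ of the Iitaka fibrations $Z_a\to W_a$ are not canonically polarized varieties (they are just varieties of dimension $\kappa(Z_a)$, with no natural polarization by $K$), so there is no moduli map to $\mathcal{M}_{\mathrm{can}}$; moreover, Viehweg--Zuo/Kov\'acs hyperbolicity only says that maximal variation forces the base to be of log general type, which gives ``variation not maximal,'' not isotriviality, and in any case $A^\circ$ is only an \emph{open subset} of an abelian variety and can perfectly well be of log general type (e.g.\ the complement of a theta divisor), so no constancy follows. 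The ``second application of the Isom functor to the $K\sim_{\mathbb{Q}}0$ sub-fibers'' is exactly where the real work lies and is left unexplained. The paper never studies the variation of the $g$-fibers directly: it exploits the \emph{other} fibration $f\colon X\to S$, whose general fibers are $K$-trivial, applies Kawamata's \'etale-fiber-bundle theorem to the maps $X_s\to Y_s$ (Stein factorizations of $X_s\to A$), trivializes the abelian-variety bundle $Y^\circ\to S^\circ$ and then the family $X_1^\circ\to S_1^\circ\times A_1$ via the polarized Isom scheme, uses the finiteness of polarized surjections between abelian varieties (Lemma \ref{lem:pol-mor-finite}) to make the splitting compatible with the map to $A$, and descends through the Galois cover $S_2^\circ\to S_1^\circ$ by a rigidity argument (a morphism from the abelian variety $A_2$ to a countable set of group homomorphisms is constant). (A side error: ``automorphism groups of polarized varieties are finite'' is false in general, e.g.\ $(\mathbb{P}^n,\mathcal{O}(1))$; one needs pseudo-effectivity of the canonical class, as in Proposition \ref{finite-auto-pol}.)

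Second, even granting isotriviality over $A^\circ$, the closing step ``purity of the branch locus extends $A^{\circ\prime}$ to an \'etale cover $A'\to A$, and properness extends the product isomorphism'' does not work. The complement of $A^\circ$ may contain divisors, so purity does not force the extended finite cover to be \'etale, and an isomorphism $X\times_A A'\cong Z_0\times A'$ over the preimage of an open subset has no reason to extend across the boundary: the fibers of $g$ over $A\setminus A^\circ$ can a priori differ from $Z_0$, and properness of $g$ alone gives no rigidity. This extension is precisely the content of the paper's Step 4, which requires a nontrivial MMP argument: pass to $X'=X\times_A A_2$, terminalize crepantly, invoke Lai's termination for varieties fibered by good minimal models, use Kawamata's theorem that minimal models are connected by flops, and check that every extremal contraction and flop over the abelian factor $A_2$ is induced from the other factor, so that the product structure on the open part propagates to $X'\cong Z'\times A_2$. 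Without an argument of this kind (or some substitute rigidity statement), your proof establishes at best a birational or generic product structure, not the biregular splitting $X\times_A A'\cong Z'\times A'$ asserted in the theorem.
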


{\small \noindent\textit{Acknowledgments.}
The first author is supported by NSFC No.1240010723, SDNSFC (No. 2024HWYQ-009, No. ZR2024MA007, No. tsqn202312060). The third author is partially supported by the National Key R and D Program of China (No. 2020YFA0713100),  NSFC (No.12122116, 12471495) and CAS Project for Young Scientists in Basic Research, Grant No. YSBR-032.}

\section{Locally trivial fibrations}
In this section we treat  \textit{locally trivial} fibrations. Here a fibration  $f\colon X\to T$ is \textit{locally trivial} if
\begin{itemize}
\item
for each closed point $t\in T$, there exists an analytic neighborhood $U$ such that $X_{U}$ is analytically isomorphic to $X_{t}\times U$.
\end{itemize}
We shall prove that under the condition that $K_{X_t}\equiv 0$, there exists an \'etale $G$-cover $T'\to T$ such that
 $X \cong (X_{t_0}\times T')/G$, where $X_{t_0}$ is a closed fiber of $f$, $G\leq \mathrm{Aut}(X_{t_0})$ acts on $X_{t_0}\times T'$ diagonally.
 In the following we shall denote this structure by $X \cong X_{t_0}\times^G T'$.

\subsection{Isom Functors}
In this subsection we review the contruction and some basic properties of Isom functors from \cite[Section 7]{patakfalvi2019beauville}, which is the main technical tool in this article.

Let $T$ be a normal noetherian scheme. Let $f^{i}\colon X^i\to T$ ($i = 1, 2$) be two flat families of geometrically normal projective varieties over $T$. Let $L_{i}$ be $f^{i}$-ample line bundles over $X^{i}$.

Consider the functor \cite{patakfalvi2019beauville}*{Definition 7.2}
\begin{align*}
\mathbf{Isom_{T}}((X^1,L_{1}),(X^2,L_{2}))
\colon& \mathbf{(Sch/T)}^{op}\longrightarrow \mathbf{Set}\\
&S\longmapsto\{\alpha\colon X^1\times_{T}S\cong_{S} X^2\times_{T}S~|~p_{1}^{*}L_{1}\cong_{S} \alpha^{*}p_{2}^{*}L_{2} \}. 
\end{align*}
Here $p_{i}\colon X^{i}\times_{T}S\to X^{i}$ is the natural projection onto the first factor ($i = 1, 2$), and the notation $p_{1}^{*}L_{1}\cong_{S} \alpha^{*}p_{2}^{*}L_{2}$ means that $p_{1}^{*}L_{1}\otimes(\alpha^{*}p_{2}^{*}L_{2})^{-1}\cong p_{S}^{*}M$ for some line bundle $M$ on $S$.

\begin{prop}\label{prop:isom-rep}
With the above notation, the functor $\mathbf{Isom_{T}}((X^1,L_{1}),(X^2,L_{2}))$ is represented by a quasi-projective scheme over $T$, denoted by $\mathrm{Isom}_{T}((X^1,L_{1}),(X^2,L_{2}))$.
\end{prop}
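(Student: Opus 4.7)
The plan is to identify an isomorphism with its graph and to represent the functor as a locally closed subscheme of a suitable relative Hilbert scheme, the key observation being that the line bundle compatibility forces a fixed Hilbert polynomial on graphs, which is what upgrades local quasi-projectivity to honest quasi-projectivity.

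First I would set $W := X^{1}\times_{T}X^{2}$ with projections $q_{1},q_{2}$, and let $\mathcal{L}:=q_{1}^{*}L_{1}\otimes q_{2}^{*}L_{2}$, which is relatively ample for $W\to T$. The relative Hilbert scheme $\mathrm{Hilb}^{\mathcal{L}}_{W/T}$ then exists as a disjoint union of quasi-projective $T$-schemes indexed by Hilbert polynomial. Given an $S$-point $\alpha$ of the functor, its graph $\Gamma_{\alpha}\subset W\times_{T}S$ is closed, $S$-flat, and the first projection $q_{1}|_{\Gamma_{\alpha}}\colon \Gamma_{\alpha}\xrightarrow{\sim} X^{1}\times_{T}S$ is an isomorphism; under this identification, $\mathcal{L}|_{\Gamma_{\alpha}}$ corresponds to $p_{1}^{*}L_{1}\otimes \alpha^{*}p_{2}^{*}L_{2}\cong p_{1}^{*}L_{1}^{\otimes 2}\otimes p_{S}^{*}M$ for some line bundle $M$ on $S$. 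Hence each geometric fiber of $\Gamma_{\alpha}\to S$ has Hilbert polynomial determined purely by $X^{1}\to T$ and $L_{1}$, and only finitely many components of $\mathrm{Hilb}^{\mathcal{L}}_{W/T}$ can arise; their union $H$ is quasi-projective over $T$.

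Inside $H$ I would then cut out the open subscheme parameterizing graphs of isomorphisms: the universal subscheme $\Gamma\subset W\times_{T}H$ is the graph of an $S$-isomorphism exactly when both projections $\Gamma\to X^{i}\times_{T}S$ are isomorphisms, and this is an open condition since the locus where a proper flat morphism is an isomorphism is open. This yields a quasi-projective $T$-scheme representing $\mathbf{Isom}_{T}(X^{1},X^{2})$ without the line bundle constraint. To impose the polarization condition, on the universal isomorphism $\alpha^{\mathrm{univ}}$ form $N:=p_{1}^{*}L_{1}\otimes (\alpha^{\mathrm{univ},*}p_{2}^{*}L_{2})^{-1}$ on $X^{1}\times_{T}\mathbf{Isom}$; the condition on an $S$-point is that $N|_{X^{1}\times_{T}S}$ is pulled back from $S$, which by cohomology and base change translates into $N$ being trivial on every geometric fiber of $X^{1}\times_{T}\mathbf{Isom}\to \mathbf{Isom}$. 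The class of $N$ then gives a $T$-morphism $\mathbf{Isom}\to \mathbf{Pic}_{X^{1}/T}$, and the desired locus is the preimage of the identity section.

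The main obstacle is this last step: showing that the condition ``$N$ is fiberwise trivial'' cuts out a closed (not merely locally closed) subscheme requires sufficient representability and separation of the relevant part of the relative Picard functor, which is delicate for flat families of geometrically normal projective varieties without further assumptions. This is precisely what \cite[Section 7]{patakfalvi2019beauville} carries out in detail via a careful analysis of numerical triviality and rigidity, and for the purposes of the present paper I would invoke their construction directly.
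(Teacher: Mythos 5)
Your proposal is correct and follows essentially the same route as the paper, whose entire proof of this proposition is a citation of \cite[Construction 7.5]{patakfalvi2019beauville}: that construction likewise realizes isomorphisms as graphs in a relative Hilbert scheme (with the Hilbert polynomial pinned down by the polarization), passes to the open locus of graphs of isomorphisms, and imposes the polarization condition via a morphism to the relative Picard scheme. Since you defer the one delicate point (representability and separatedness of the relevant Picard functor for flat families of geometrically normal projective varieties) to the same reference, your argument is in substance exactly the one the paper invokes.
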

\begin{proof}
    See \cite[Construction 7.5]{patakfalvi2019beauville}.
\end{proof}

\begin{prop}\label{prop:bc-isom-fct}
With the above notation, the scheme
$\mathrm{Isom}_{T}((X^1,L_{1}),(X^2,L_{2}))$ is compatible with base change, that is, for a base change $S\to T$, we have:
$$\mathrm{Isom}_{T}((X^1,L_{1}),(X^2, L_{2}))\times_{T}S\cong \mathrm{Isom}_{S}((X^1_{S}, (L_{1})_{S}),(X^2_{S}, (L_{2})_{S})).$$
\end{prop}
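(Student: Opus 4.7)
The plan is to invoke Yoneda's lemma: since both sides of the asserted isomorphism are already known to be schemes (the right by Proposition \ref{prop:isom-rep} applied to the $S$-families $X^i_S\to S$, the left by Proposition \ref{prop:isom-rep} followed by base change), it suffices to exhibit a natural isomorphism, functorial in an $S$-scheme $U\to S$, between the functors of points represented by the two sides.

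First I would unravel the functor of points of the left side. By the universal property of fibre product,
$$\mathrm{Hom}_S\bigl(U,\,\mathrm{Isom}_T((X^1,L_1),(X^2,L_2))\times_T S\bigr)=\mathrm{Hom}_T\bigl(U,\,\mathrm{Isom}_T((X^1,L_1),(X^2,L_2))\bigr),$$
and by Proposition \ref{prop:isom-rep} this set consists of $T$-isomorphisms $\alpha\colon X^1\times_T U\xrightarrow{\sim}X^2\times_T U$ over $U$ satisfying $p_1^*L_1\otimes(\alpha^*p_2^*L_2)^{-1}\cong p_U^*M$ for some line bundle $M$ on $U$. Similarly, the functor of points of the right side consists of $S$-isomorphisms $\beta\colon X^1_S\times_S U\xrightarrow{\sim}X^2_S\times_S U$ over $U$ together with the analogous line-bundle condition written for $(L_i)_S$.

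Next I would use associativity of fibre product to produce a canonical identification
$$X^i_S\times_S U=(X^i\times_T S)\times_S U\cong X^i\times_T U,$$
compatible with the projection to $U$ and matching the pullback of $(L_i)_S$ with the pullback of $L_i$. Under this identification, an isomorphism of the $U$-schemes $X^1\times_T U$ and $X^2\times_T U$ is automatically compatible with both the $S$- and $T$-structures (both base maps factor through $U$), so $T$-isomorphisms over $U$ and $S$-isomorphisms over $U$ are the same datum; likewise the two line-bundle conditions become literally the same statement on $U$. Hence the two functors agree naturally in $U$, and Yoneda delivers the desired isomorphism of $S$-schemes.

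The only real work lies in the bookkeeping of these canonical identifications of fibre products and line-bundle pullbacks under associativity. No conceptual obstacle is anticipated, as the assertion is essentially formal once representability of the Isom functors (Proposition \ref{prop:isom-rep}) is in hand.
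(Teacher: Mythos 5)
Your proposal is correct, but it is worth noting that the paper does not actually argue this point: its ``proof'' is a citation of \cite{patakfalvi2019beauville}*{Proposition 7.8}, so you are supplying the argument that the paper outsources. Your route --- identify the functor of points of $\mathrm{Isom}_{T}((X^1,L_1),(X^2,L_2))\times_T S$ on $(\mathbf{Sch}/S)$ with the restriction of the $\mathbf{Isom_T}$ functor, use the canonical identifications $X^i_S\times_S U\cong X^i\times_T U$ and the matching of the two ``trivial up to a line bundle pulled back from $U$'' conditions, and conclude by Yoneda --- is exactly the standard formal proof, and all the identifications you describe do go through since a $U$-isomorphism is automatically an $S$- and a $T$-isomorphism. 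One small refinement: you invoke Proposition \ref{prop:isom-rep} a second time to know the right-hand side is representable, but the paper's setup for that proposition assumes the base is a \emph{normal} noetherian scheme, which an arbitrary base change $S\to T$ (e.g.\ the scheme $I$ used later in Proposition \ref{prop:pol-isotr}(2)) need not satisfy. You can sidestep this entirely: your computation already shows that $\mathrm{Isom}_{T}((X^1,L_1),(X^2,L_2))\times_T S$ represents the functor $\mathbf{Isom_S}((X^1_S,(L_1)_S),(X^2_S,(L_2)_S))$, which proves representability over $S$ and the asserted isomorphism in one stroke, with no extra hypotheses on $S$. With that adjustment the argument is complete and matches what the cited reference does.
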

\begin{proof}
See \cite{patakfalvi2019beauville}*{Proposition 7.8}.
\end{proof}

\begin{rem}\label{rem:fact-isom}
The following statements will be used in the sequel.
\begin{itemize}
    \item [(a)] Let $\mathrm{Aut}_{T}(X^1,L_{1})=\mathrm{Isom}_{T}((X^1,L_{1}),(X^1,L_{1}))$, which is by definition a group scheme over $T$. The two group schemes $\mathrm{Aut}_{T}(X^1,L_{1})$ and $\mathrm{Aut}_{T}(X^2,L_{2})$ act on $\mathrm{Isom}_{T}((X^1,L_{1}),(X^2,L_{2}))$ by composition in a natural way.
    \item [(b)] Let $I=\mathrm{Isom}_{T}((X^1,L_{1}),(X^2,L_{2}))$. Then by Proposition \ref{prop:bc-isom-fct}, for a point $t\in T$, the fiber $I_{t}=\mathrm{Isom}_{k(t)}((X^1_{t},L^{1}_{t}),(X_t^2,L^{2}_{t}))$. If $I_t\neq \emptyset$, then
        $(X^{1}_{t},L^{1}_{t})\cong (X^{2}_{t},L^{2}_{t})$, and we have an isomorphism $I_t\cong \mathrm{Aut}_{k(t)}(X^{1}_{t},L^{1}_{t})$ of schemes over $k(t)$.
\end{itemize}
\end{rem}

\subsection{Trivialization of polarized isotrivial fibrations}

\begin{prop}\label{prop:pol-isotr}
Let $X$ be a quasi-projective normal variety and $T$ a smooth variety. Let $f\colon X\rightarrow T$ be a locally trivial fibration and $L$ an $f$-ample line bundle on $X$. Fix a closed point $t_0\in T$, denote by $X_0$ the fiber over $t_0$ and set $L_0=L|_{X_0}$.  Assume that:
\begin{itemize}
\item for each closed point $t\in T$, there exists a polarized isomorphism $(X_{t},L_{t}:=L|_{X_{t}})\cong(X_{0},L_{0})$.
\end{itemize}
Then the following statements hold true.

(1) The morphism $$\pi\colon I=\mathrm{Isom}_{T}((X,L),(X_0\times T,L_{0} \times T)) \to T$$ is a $G=\mathrm{Aut}(X_0,L_0)$-torsor.

(2)
There exists a natural morphism $X_0\times I \to X$ such that its restriction on each fiber over $t\in T$ is the evaluation map
$X_{0}\times I_t\to X_{t}$ given by $(x_0,\sigma_t) \mapsto \sigma_t^{-1}x_0 \in X_t$.
This induces an isomorphism $X_0\times I \cong X\times_{T}I$, the induced action of $G$ on $X\times_{T}I$ is compatible with the diagonal action of $G$ on $X_0\times I$.

(3) Denote by $I_0$ the component of $I$ containing $\mathrm{id}_{X_0} \in \pi^{-1}(t_0)$ and $H \leq  \mathrm{Aut}(X_0,L_0)$ the subgroup fixing $I_0$. Then $I_0 \to T$ is an $H$-torsor and $[\mathrm{Aut}(X_0,L_0):H] < +\infty$. Moreover, if denoting $\mathrm{Aut}(X_0,L_0)$ to be a disjoint union $\coprod_ig_iH$, one has $I = \coprod_ig_iI_0$.

In particular, if $\mathrm{Aut}(X_0,L_0)$ is a finite group, then $I_0 \to T$ is an \'etale $H$-cover and $X\cong X_0\times^{H} I_0$. 

\end{prop}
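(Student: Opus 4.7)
The plan is to establish (1), (2), (3) in sequence. Part (1) combines Remark \ref{rem:fact-isom}(b) with the local triviality of $f$ to produce analytic-local trivializations of $\pi$. Part (2) is formal from the universal property of the representing scheme $I$. Part (3) analyzes the $G$-action on the connected components of $I$ to single out $I_0$ as an $H$-torsor, after which the trivialization claim follows by descent.

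For (1), the fiberwise statement $I_t\cong G$ (as a $G$-torsor) is immediate from Remark \ref{rem:fact-isom}(b) and the hypothesis $(X_t,L_t)\cong(X_0,L_0)$. To obtain global torsor structure, I pick for each $t\in T$ an analytic neighborhood $U$ on which $X_U\cong X_0\times U$ over $U$; shrinking $U$ to a polydisc ensures $\mathrm{Pic}(U)=0$, so the difference between $L|_{X_U}$ and $p_1^*L_0$ on $X_0\times U$ lies in the pullback equivalence allowed by the definition of the Isom functor. This yields an analytic section of $I_U\to U$, and together with the free $G$-action produces a trivialization $I_U\cong G\times U$ analytically, so $\pi\colon I\to T$ is a $G$-torsor.

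For (2), the universal property of $I$ provides a tautological isomorphism $\alpha_{\mathrm{univ}}\colon X\times_T I\xrightarrow{\sim} X_0\times I$, explicitly $(x,\sigma)\mapsto(\sigma(x),\sigma)$. Composing $\alpha_{\mathrm{univ}}^{-1}$ with the projection $X\times_T I\to X$ yields the desired morphism, with the fiberwise formula $(x_0,\sigma_t)\mapsto\sigma_t^{-1}(x_0)$ immediate. The $G$-action on $X\times_T I$ via the second factor (Remark \ref{rem:fact-isom}(a)) transports under $\alpha_{\mathrm{univ}}$ to the diagonal action $(x_0,\sigma)\mapsto(gx_0,g\sigma)$ on $X_0\times I$, matching the statement.

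For (3), I first argue every connected component of $I$ surjects onto $T$: since $\pi$ is smooth and analytically locally trivial, the image of each component is both open and closed in the connected $T$, hence equals $T$. The group $G$ permutes the components, and $H$ stabilizes $I_0$ by definition, so the $G$-orbit of $I_0$ has cardinality $[G:H]$; since $G$ acts transitively on each fiber $I_t$, every component meets $I_t$ and thus lies in this orbit, giving $I=\coprod_i g_i I_0$, and the quasi-projectivity of $I$ forces $[G:H]<\infty$. The induced $H$-action on $I_0$ is free (inherited from the $G$-torsor $I$) and transitive on each fiber by the same analysis, and restricting the local trivialization of $I$ shows $I_0\to T$ is locally of the form $H\times U$, hence an $H$-torsor. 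When $G$ is finite, $H$ is a finite (hence étale, in characteristic zero) group, so $I_0\to T$ is a finite étale $H$-cover; restricting the iso of (2) to $I_0$ gives an $H$-equivariant $X\times_T I_0\cong X_0\times I_0$, and descent along the free diagonal $H$-actions on both sides yields $X\cong X_0\times^H I_0$. The main obstacle I expect is the line bundle comparison in (1) and the combinatorial matching of the local trivialization of $I$ with the $H$-torsor structure on $I_0$ in (3); once these are set up, the remaining arguments are formal descent.
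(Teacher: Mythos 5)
The weak point is in your argument for (1), and it is a genuine gap. After restricting to an analytic trivialization $X_U\cong X_0\times U$, you claim that because $\mathrm{Pic}(U)=0$ the difference $L|_{X_U}\otimes p_1^*L_0^{-1}$ is a pullback from $U$, so that the identity gives a section of $I_U\to U$. This implication is false: a line bundle on $X_0\times U$ whose restriction to each fiber $X_0\times\{t\}$ is the class $L_t\otimes L_0^{-1}\in\mathrm{Pic}^0(X_0)$ need not be of the form $p_1^*M_1\otimes p_2^*M_2$, and the hypothesis only says that $(X_t,L_t)\cong(X_0,L_0)$ via \emph{some} isomorphism, not via the chosen trivialization $\psi_t$; in general $\psi_t^*L_t\otimes L_0^{-1}$ is a nonzero, $t$-dependent element of $\mathrm{Pic}^0(X_0)$. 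A concrete failure: take $X_0=A$ an abelian variety, $X=A\times T$, and $L$ obtained from $p_1^*L_0$ by twisting with a nonconstant family of elements of $\mathrm{Pic}^0(A)$; then every $(X_t,L_t)\cong(A,L_0)$ (via translations), but the identity is not a point of $I_t$ for general $t$, so your claimed section does not exist. The existence of \emph{holomorphically varying} polarized isomorphisms $\sigma_t$ is exactly the content of (1), and it is what the paper's proof supplies: it introduces the map $\beta\colon T\to\mathrm{Pic}^0(X_0)$, $t\mapsto L_0\otimes L_t^{-1}$, identifies $I$ with the fiber product $\mathrm{Aut}(X_0)_{num}\times_{\mathrm{Pic}(X_0)}T$ along $\eta_0(\sigma)=\sigma^*L_0\otimes L_0^{-1}$ and $-\beta$, and then uses that $\mathrm{Aut}(X_0)_{num}\to\mathrm{Im}(\eta_0)$ is an $\mathrm{Aut}(X_0,L_0)$-torsor (so it admits local sections, e.g.\ lifting $\beta$ through the isogeny $\phi_{L_0}$ in the abelian example) to conclude that $I\to T$ is a torsor. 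Your proposal contains no substitute for this step, so (1) is not proved.

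Parts (2) and (3) of your proposal are essentially the paper's arguments: (2) is the tautological isomorphism coming from $\Delta_I\colon I\to I\times_TI$ and base change of the Isom scheme, with the same compatibility of the $G$-actions, and (3) is the component analysis (each component maps onto $T$ once the torsor structure is known, $G$ permutes components transitively on fibers, finitely many components by quasi-projectivity, and $X\cong X_0\times^H I_0$ by quotienting the isomorphism of (2)). These are fine, but they all rely on (1), so the proof stands or falls with the missing local-section argument there.
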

\begin{proof}
(1) To show this assertion, we may restrict us on an analytic open subset of $T$ and assume $X = X_0\times T$. In the following we set $L_t= L|_{X_t=X_0\times \{t\}}$. Then $L_t \equiv p_1^*L_0|_{X_t}$.
We have a holomorphic map 
$$\beta: T \to \mathrm{Pic}^0(X_0), ~t \mapsto L_0\otimes L_t^{-1}.$$



We first consider the functor
\begin{align*}
\mathbf{Isom_{T}}^{pre}((X,L),(X_0,L_0)\times T)
\colon& \mathbf{(Sch/T)}^{op}\longrightarrow \mathbf{Set}\\
&S\longmapsto\{\alpha\colon X\times_{T}S\cong_{S} X_0\times S~|~p_{1}^{*}L\equiv_{S} \alpha^{*}p_{2}^{*}L_{0}\}.
\end{align*}
Remark that the above subscribed condition is equivalent to that
\begin{itemize}
\item
for every $s\in S$~and~$m\in \mathbb{Z}$, $\chi((p_1^*L\otimes \alpha^*p_2^*L_0)^{\otimes m})_s = \chi(X_0, L_0^{\otimes 2m})$, where $p_1:  X\times_{T}S \to X$ and $p_2:  X_0\times S \to X_0$ denote the natural projections to the first factors respectively.
\end{itemize}
This functor is the same one as appeared at the beginning of \cite[Construction 7.5]{patakfalvi2019beauville}. And by the construction of Hilbert scheme \cite[Chapter 5]{books2005fundamental}, it is
represented by a quasi-projective scheme $I^{pre} = \mathrm{Aut}(X_0)_{num} \times T$ over $T$, where $\mathrm{Aut}(X_0)_{num}$ denotes the algebraic group consisting of isomorphisms $\sigma \in \mathrm{Aut}(X_0)$ such that $\sigma^*L_0 \equiv L_0$.
Next consider the holomorphic map
$$\eta: I^{pre} = \mathrm{Aut}(X_0)_{num} \times T \to  \mathrm{Pic}(X_0), ~(\sigma,t) \mapsto \sigma^*L_0\otimes L_t^{-1}.$$
Denote
$$\eta_t: \mathrm{Aut}(X_0)_{num}  \cong \mathrm{Aut}(X_0)_{num} \times \{t\} \xrightarrow{\eta(-,t)} \mathrm{Pic}(X_0).$$
Then
\begin{itemize}
    \item 
For the morphism $\eta_0: \mathrm{Aut}(X_0)_{num} \to \mathrm{Pic}(X_0)$, the fiber over $\sigma^*L_0\otimes L_0^{-1}$ is nothing but the coset $\mathrm{Aut}(X_0, L_0) \cdot\sigma$. Combining with the natural action of $\mathrm{Aut}(X_0, L_0)$ on $\mathrm{Aut}(X_0)_{num}$, we deduce an
$\mathrm{Aut}(X_0, L_0)$-torsor structure $\mathrm{Aut}(X_0)_{num} \to \mathrm{Im}(\eta_0)$.
\item 
By $\sigma^*L_0\otimes L_t^{-1} \cong (\sigma^*L_0\otimes L_0^{-1}) \otimes  (L_0\otimes  L_t^{-1})$, we have that $\eta_t= \eta_0 + \beta(t)$.
\end{itemize} 
 Now let's turn back to $I\to T$. As for a closed point $t\in T$ the fiber 
$$I_t=\{\sigma \in \mathrm{Aut}(X_0)_{num}~|~\sigma^*L_0 \cong L_t \},$$
 it follows that $I \cong \eta^{-1}(0)$, in turn we find that $I$ coincides with the fiber product
$$\xymatrix{
&I\cong \mathrm{Aut}(X_0)_{num}\times_{\mathrm{Pic}(X_0)}T\ar[d]\ar[r] &\mathrm{Aut}(X_0)_{num}\ar[d]^{\eta_0}&\\
&T \ar[r]^{-\beta}& \mathrm{Pic}(X_0) &.
}$$
Naturally, the group homomorphism $\eta_0: \mathrm{Aut}(X_0)_{num} \to \mathrm{Pic}(X_0)$ induces an $\mathrm{Aut}(X_0, L_0)$-torsor structure  $\mathrm{Aut}(X_0)_{num} \to \mathrm{Im(\eta_0)}$. As a consequence we deduce that $I \to T$ is an $\mathrm{Aut}(X_0, L_0)$-torsor.

(2) Applying Proposition \ref{prop:bc-isom-fct}, we have an isomorphism as follows:
$$I\times_{T}I\cong \mathrm{Isom}_{I}((X\times_{T}I,L\times_{T}I),(X_0\times I,L_{0} \times I))$$
which corresponds to an isomorphism between representable functors. Then the diagonal morphism $\Delta_{I}\colon I\to I\times_{T}I$ gives rise to an isomorphism $\psi\colon (X\times_{T}I,L\times_{T}I)\cong (X_0\times I,L_{0} \times I)$. We can verify that the action of $G$ on $X\times_{T}I$ is compatible with the diagonal action of $G$ on $X_0\times I$ as follows:
$$\xymatrixrowsep{0.4in}
\xymatrix{&X_{0}\times I\ar[d]^{(g,g)}\ar[r]^{\psi^{-1}}&X\times_{T}I\ar[d]^{(\mathrm{id}_{X}, g)}&(x,\sigma)\ar@{|_-{>}}[d]\ar@{|_-{>}}[r]&(\sigma^{-1} x,\sigma)\ar@{|_-{>}}[d]&\\
&X_{0}\times I\ar[r]^{\psi^{-1}}&X\times_{T}I&(g\cdot x, g\circ\sigma)\ar@{|_-{>}}[r]&(\sigma^{-1}x, g\circ\sigma).&
}$$

(3) By definition, $H$ acts naturally on $I_{0}\to T$. Since $I\to T$ is a $G$-torsor, $I_{0}\to T$ is an $H$-torsor and the remaining assertions in (3) follow immediately.

\smallskip

In particular, if $G=\mathrm{Aut}(X_{0},L_{0})$ is a finite group, then $I_{0}\to T$ is an \'etale $H$-cover and $X\cong X_{0}\times^{H}I_{0}$ by (2).

\end{proof}

\subsection{Automorphism of polarized abelian varieties}
\begin{thm}\label{pol-av}
Let $A$ be an abelian variety, and let $L$ be an ample line bundle over $A$. Then the morphism
\begin{align}
    \phi_{L}\colon A&\longrightarrow \mathrm{Pic}^{0}(A) \notag \\
             a&\longmapsto t^{*}_{a}L\otimes L^{-1}\notag
\end{align}
is an isogeny of abelian varieties.
\end{thm}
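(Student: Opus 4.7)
The plan is to verify three properties of $\phi_L$: it is a homomorphism of abelian varieties, its image lies in $\Pic^0(A)$, and its kernel is finite. Since $\dim A = \dim \Pic^0(A) = g$, finiteness of the kernel then forces surjectivity and yields an isogeny.

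First, I would invoke the theorem of the square, which gives $t_{a+b}^{*}L \otimes L \cong t_{a}^{*}L \otimes t_{b}^{*}L$ for all $a,b \in A$. Rearranging this yields $\phi_L(a+b) \cong \phi_L(a) \otimes \phi_L(b)$, so $\phi_L$ is a morphism of abelian varieties sending $0$ to $\mathcal{O}_A$, hence a group homomorphism. Since $\phi_L(0) = \mathcal{O}_A$ lies in $\Pic^0(A)$ and $A$ is connected, the entire image of the continuous map $\phi_L$ is contained in the identity component $\Pic^0(A)$. This means $\phi_L$ factors as a homomorphism $A \to \Pic^0(A)$.

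The heart of the argument is the finiteness of $K(L) := \ker(\phi_L)$. I would argue by contradiction: suppose $K(L)$ is not finite, and let $B$ denote the identity component of $K(L)$, a positive-dimensional abelian subvariety of $A$. By the definition of $K(L)$, one has $t_{b}^{*}L \cong L$ for every $b \in B$, and restricting to $B$ gives $t_{b}^{*}(L|_{B}) \cong L|_{B}$ for every $b \in B$. Thus $L|_{B}$ is translation-invariant, so $L|_B \in \Pic^0(B)$. On the other hand, $L$ ample on $A$ forces $L|_{B}$ to be ample on $B$; since an ample line bundle on a positive-dimensional abelian variety has strictly positive top self-intersection, it cannot be numerically trivial, contradicting $L|_{B} \in \Pic^{0}(B)$. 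Hence $K(L)$ is finite.

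Combining these three properties, $\phi_L$ is a homomorphism between abelian varieties of the same dimension $g$ with finite kernel, and so is necessarily surjective; that is, $\phi_L$ is an isogeny. Every step except the finiteness of $K(L)$ is formal; the main obstacle is exactly the finiteness step, resting on the incompatibility between ampleness and membership in $\Pic^0$ on a positive-dimensional abelian variety.
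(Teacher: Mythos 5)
Your argument is correct, but note that the paper does not prove this statement at all: it simply cites Mumford, \emph{Abelian Varieties}, II.8, Theorem 1, so what you have written is essentially a reconstruction of the standard textbook proof of the cited result. Your outline matches that standard route: theorem of the square for the homomorphism property, image in $\mathrm{Pic}^0$, finiteness of $K(L)=\ker\phi_L$, and then the dimension count $\dim A=\dim \mathrm{Pic}^0(A)$ to get surjectivity. The one genuine difference is in the finiteness step: Mumford shows that if $B$ is the positive-dimensional identity component of $K(L)$ then $L|_B\in\mathrm{Pic}^0(B)$ and derives a contradiction from cohomology (all cohomology of a nontrivial element of $\mathrm{Pic}^0$ vanishes, while an ample bundle has nonzero Euler characteristic), whereas you derive it from the numerical triviality of $L|_B$ against $(L|_B)^{\dim B}>0$ for an ample bundle; both are valid, and yours is arguably the more elementary intersection-theoretic variant. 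Be aware that your proof quietly leans on three nontrivial standard facts that live in the same circle of ideas as the theorem itself: that $a\mapsto t_a^*L\otimes L^{-1}$ is an algebraic morphism into the Picard scheme (via the Mumford bundle $m^*L\otimes p_1^*L^{-1}\otimes p_2^*L^{-1}$ on $A\times A$), that translation-invariance of a line bundle on an abelian variety characterizes membership in $\mathrm{Pic}^0$, and that $\dim\mathrm{Pic}^0(A)=\dim A$; these are exactly the ingredients established in the reference the paper cites, so invoking them is fine, but they are where the real work is hidden.
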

\begin{proof}
See \cite[II.8 Theorem 1]{mumford1970abelian}.
\end{proof}
\begin{prop}\label{finite-auto-pol}
If $(X,\Delta)$ is a projective klt pair over an algebraically closed field $k$, such that $K_{X}+\Delta$ is pseudo-effective and $L$ is an ample line bundle on $X$, then
$$\mathrm{Aut}((X,\Delta);L):=\{\sigma\in\mathrm{Aut}(X,\Delta)~|~\sigma^{*}L\cong L\}$$
is finite.
\end{prop}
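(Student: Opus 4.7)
The plan is to realize $G := \mathrm{Aut}((X,\Delta);L)$ as a finite-type affine group scheme over $k$ and then show, using the pseudo-effectiveness of $K_{X}+\Delta$ together with the klt assumption, that its identity component $G^{0}$ must be trivial, so that $G$ itself is finite.

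First I would check that $G$ is a finite-type affine group scheme. Since $L$ is ample, for $m\gg 0$ the line bundle $L^{\otimes m}$ is very ample. Any $\sigma\in G$, once an isomorphism $\sigma^{*}L\cong L$ is chosen, acts linearly on $H^{0}(X,L^{\otimes m})$ up to a nonzero scalar, and this induces a closed embedding of group schemes $G\hookrightarrow\mathrm{PGL}(H^{0}(X,L^{\otimes m}))$. Thus $G$ is quasi-projective, of finite type, and affine, and in particular its identity component $G^{0}$ is a connected affine algebraic group over $k$.

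Next I would argue by contradiction that $G^{0}$ must be trivial. If $\dim G^{0}>0$, the structure theory of connected affine algebraic groups produces a subgroup isomorphic to $\mathbb{G}_{a}$ or $\mathbb{G}_{m}$, giving a nontrivial one-parameter action on $X$ preserving $(X,\Delta)$ and the polarization class. Because $\Delta$ is preserved, the generic orbit does not lie in $\mathrm{Supp}(\Delta)$, and its closure $\bar{C}\cong\mathbb{P}^{1}$ meets $\mathrm{Supp}(\Delta)$ only at the at most two fixed points of the one-parameter action on $\bar{C}$. Varying the orbit produces a covering family $\{\bar{C}_{t}\}$ of rational curves. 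For a general $\bar{C}_{t}$, which is a free rational curve in the smooth locus, one obtains $K_{X}\cdot\bar{C}_{t}\leq -2$ by a bend-and-break/adjunction computation, while the klt inequalities $a_{i}<1$ combined with the restriction of intersections to the fixed points bound $\Delta\cdot\bar{C}_{t}<2$. Hence $(K_{X}+\Delta)\cdot\bar{C}_{t}<0$, which contradicts the pseudo-effectiveness of $K_{X}+\Delta$ paired against the movable class $[\bar{C}_{t}]$ of the covering family.

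The main obstacle will be the final intersection estimate: one must control possible tangencies between $\bar{C}_{t}$ and the components of $\Delta$ at the fixed points, and justify the free-curve bound $K_{X}\cdot\bar{C}_{t}\leq -2$ in the possibly singular klt setting. A cleaner alternative is to invoke the log analogue of the Lieberman--Matsushima theorem, namely that $\mathrm{Aut}^{0}(X,\Delta)$ is an abelian variety whenever $(X,\Delta)$ is projective klt with $K_{X}+\Delta$ pseudo-effective. Granting this, $G^{0}\subseteq\mathrm{Aut}^{0}(X,\Delta)$ is simultaneously an abelian subvariety and an affine algebraic group, and the only such group is the trivial one, so $G^{0}=\{e\}$ and $G$ is finite, bypassing the explicit rational-curve computation entirely.
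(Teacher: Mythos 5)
Your soft reductions are correct and standard: since every $\sigma$ satisfies $\sigma^{*}L\cong L$, the group $G=\mathrm{Aut}((X,\Delta);L)$ embeds as a closed subgroup of $\mathrm{PGL}(H^{0}(X,L^{\otimes m}))$ for $m\gg 0$, hence is affine and of finite type (this is the point that gives finitely many components, since $\mathrm{Aut}(X,\Delta)$ itself is only locally of finite type), and a connected affine subgroup of an abelian variety is trivial. For comparison, the paper does not argue at all here: its proof is a citation of Patakfalvi--Zdanowicz, Proposition 10.1, which is verbatim this proposition. So everything hinges on the one step you have not supplied: that the klt and pseudo-effectivity hypotheses forbid a positive-dimensional $G^{0}$, equivalently a nontrivial $\mathbb{G}_a$- or $\mathbb{G}_m$-action preserving $(X,\Delta)$.

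Neither of your routes delivers that step. In Route A both estimates are unjustified and, as stated, false: an orbit closure can meet $\mathrm{Supp}(\Delta)$ at a fixed point with intersection multiplicity $>1$ (for the action $t\cdot[x:y:z]=[x:ty:t^{n}z]$ on $\mathbb{P}^{2}$ the general orbit closure $\{y^{n}=x^{n-1}z\}$ meets the invariant line $\{z=0\}$ only at $[1:0:0]$ but with multiplicity $n$, so a coefficient $1-\varepsilon$ on that line already violates $\Delta\cdot\bar{C}_{t}<2$), and the freeness bound $K_{X}\cdot\bar{C}_{t}\leq-2$ is a statement about smooth varieties in characteristic $0$, whereas here $X$ is only klt, the curve passes through the fixed locus (which may be singular), and the proposition is stated over an arbitrary algebraically closed field; the honest computation must be done on an equivariant log resolution and must treat $K_{X}+\Delta$ through the log discrepancies rather than by separate bounds on $K_{X}\cdot\bar{C}_{t}$ and $\Delta\cdot\bar{C}_{t}$ --- that is exactly the hard content, and it is missing. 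Route B does not fill the gap either: the ``log Lieberman--Matsushima'' theorem you invoke carries no reference and is in fact equivalent to the proposition you are proving. Indeed, if $H\leq\mathrm{Aut}^{0}(X,\Delta)$ is a nontrivial connected linear subgroup, then $\sigma^{*}L\otimes L^{-1}\in\mathrm{Pic}^{0}(X)$ for $\sigma\in H$, and the morphism $H\to\mathrm{Pic}^{0}(X)$, $\sigma\mapsto\sigma^{*}L\otimes L^{-1}$, is constant (a connected affine group is unirational and $\mathrm{Pic}^{0}(X)$ is an abelian variety), so $H\subseteq G$; hence finiteness of $G$ is equivalent to $\mathrm{Aut}^{0}(X,\Delta)$ having no linear part. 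So Route B merely replaces the proposition by an unproved statement of the same depth, whose nearest actual source is the very result of Patakfalvi--Zdanowicz that the paper cites; as it stands the proposal is incomplete.
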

\begin{proof}
See \cite[Proposition 10.1]{patakfalvi2019beauville}.
\end{proof}

\begin{lem}\label{lem:pol-mor-finite}
  Let $(A',L')$ and $(A,L)$ be two polarized abelian varieties of same dimension over a field $k$, where $L',L$ are ample line bundles on $A',A$ respectively. Assume that there exists a morphism $\alpha:A'\to A$ such that $\alpha^*L \cong L'$.
  Then the functor $\mathbf{Mor}((A',L'),(A,L))$ defined by \[ \hbox{$k$-scheme}~T \mapsto \{ \alpha\colon A'_T\to A_T \mid \hbox{$\alpha$ is surjective and } \alpha^*(L_T) \cong L'_T\}
  \]
  is represented by a finite scheme over $k$.
\end{lem}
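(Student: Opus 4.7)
The plan is to realize $\mathrm{Mor}((A',L'),(A,L))$ as a locally closed subscheme of a projective Hilbert scheme and then show that it has dimension zero at every closed point. First I would observe that for any $\alpha \in \mathbf{Mor}((A',L'),(A,L))(T)$ the surjection $\alpha$ is an isogeny of degree $(L')^{n}/L^{n}$ (with $n=\dim A=\dim A'$), so the graph $\Gamma_{\alpha}\subset A'_{T}\times_{T}A_{T}$ has Hilbert polynomial, with respect to $L'\boxtimes L$,
\[
P(m)\;=\;\chi\bigl(\Gamma_{\alpha},(p_{1}^{*}L'\otimes p_{2}^{*}L)^{m}|_{\Gamma_{\alpha}}\bigr)\;=\;\chi\bigl(A',L'^{\otimes 2m}\bigr),
\]
independent of $\alpha$. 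Sending $\alpha\mapsto\Gamma_{\alpha}$ embeds the functor as a subfunctor of $\mathrm{Hilb}^{P}(A'\times A)$. The conditions that $p_{1}|_{\Gamma}$ be an isomorphism and that $p_{2}^{*}L|_{\Gamma}\cong p_{1}^{*}L'|_{\Gamma}$ (the latter via the relative Picard scheme of the universal graph) are locally closed, so the representing scheme $M$ exists as a quasi-projective, and in particular finite-type, scheme over $k$.

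For the dimension-zero statement, I would fix a closed point $\alpha_{0}\in M$ and write $\alpha_{0}=t_{b_{0}}\circ\phi_{0}$ with $\phi_{0}\colon A'\to A$ a group homomorphism and $b_{0}\in A$; surjectivity of $\alpha_{0}$ between abelian varieties of equal dimension forces $\phi_{0}$ to be an isogeny. Since $\mathbf{Hom}_{\mathrm{gp}}(A',A)$ is representable by an \'etale (in fact constant) group scheme over $k$, for any connected $T$ and any $T$-deformation $\alpha_{T}$ of $\alpha_{0}$ the group-homomorphism part remains $\phi_{0}$, so $\alpha_{T}=t_{b(T)}\circ\phi_{0}$ for a unique morphism $b\colon T\to A$ with $b(t_0)=b_{0}$. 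Expanding the polarization condition
\[
\alpha_{T}^{*}L_{T}\;=\;\phi_{0}^{*}\bigl(L_{T}\otimes\phi_{L}(b(T))\bigr)\;\cong\;L'_{T}
\]
and subtracting the analogous identity for $\alpha_{0}$ yields $(\phi_{0}^{*}\circ\phi_{L})(b(T)-b_{0})=0$ in $\mathrm{Pic}^{0}(A')(T)$. By Theorem \ref{pol-av}, $\phi_{L}$ is an isogeny, and $\phi_{0}^{*}\colon\mathrm{Pic}^{0}(A)\to\mathrm{Pic}^{0}(A')$ is an isogeny because $\phi_{0}$ is. Hence $\phi_{0}^{*}\circ\phi_{L}$ is an isogeny of abelian varieties of the same dimension, with finite kernel, so $b(T)-b_{0}$ factors through a finite group scheme; connectedness of $T$ together with $b(t_{0})=b_{0}$ then forces $b(T)\equiv b_{0}$. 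Thus $M$ is zero-dimensional at $\alpha_{0}$.

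Combining finite type with pointwise zero-dimensionality, $M$ is a finite scheme over $k$, as required. The main obstacle I anticipate is carefully justifying the locally closed subfunctor description in the first step, in particular translating $\alpha^{*}L\cong L'$ into a genuinely locally closed condition on the universal family over $\mathrm{Hilb}^{P}(A'\times A)$ rather than just a set-theoretic one. This is in the spirit of the construction of $\mathbf{Isom}_{T}$ reviewed in \cite[Section 7]{patakfalvi2019beauville}; an alternative and perhaps cleaner route would be to realize $\mathbf{Mor}((A',L'),(A,L))$ directly as a locally closed subscheme of a suitable $\mathrm{Isom}$-scheme, which would bypass the need for a separate construction.
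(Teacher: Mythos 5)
Your argument is correct and shares the paper's two-step skeleton—first a quasi-projective representing scheme (the paper gets this by citing \cite[Construction 7.5]{patakfalvi2019beauville}; your graph/Hilbert-scheme description is the same device, and you rightly flag that making the polarization condition locally closed is exactly what that construction supplies), then a decomposition of a surjection of abelian varieties into a homomorphism plus a translation, with Theorem \ref{pol-av} controlling the translation part. Where you genuinely diverge is the finiteness mechanism. The paper argues pointwise: writing $\alpha=i\circ t_x$, it fixes the degree $d=L'^n/L^n$, invokes finiteness of isogenies $A'\to A$ of a given degree \cite[Proposition 7.14]{edixhoven2012abelian} to bound $i$, and then uses that $\phi_{i^*L}$ is an isogeny to bound $x$, upgrading the finite point count to finiteness of the scheme via quasi-projectivity. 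You instead test against an arbitrary connected $T$: rigidity of homomorphisms of abelian schemes (\'etaleness of the Hom scheme) freezes the homomorphism part $\phi_0$, and the isogeny $\phi_0^*\circ\phi_L$ freezes the translation $b$, so you get zero-dimensionality of the representing scheme directly. Your route buys a finer scheme-theoretic statement (in characteristic $0$ it even shows each component is a single reduced point) and replaces the bounded-degree-isogeny citation by the equally standard rigidity of homomorphisms, at the cost of needing representability and \'etaleness of $\mathbf{Hom}_{gp}(A',A)$, which you should cite rather than assert. Two small points to tidy: the identity $t_{b(T)}^*L_T\cong L_T\otimes\phi_L(b(T))$ holds only up to a line bundle pulled back from $T$, which is harmless since you immediately pass to $T$-points of $\mathrm{Pic}^0(A')$; and over a field of positive characteristic $\ker(\phi_0^*\circ\phi_L)$ may be infinitesimal, so the conclusion $b\equiv b_0$ should be drawn for reduced connected $T$ (which still yields dimension zero, hence finiteness), and ``constant'' for the Hom scheme should be weakened to ``\'etale'' over a non-closed field.
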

\begin{proof}
According to \cite[Construction~7.5]{patakfalvi2019beauville}, $\mathbf{Mor}((A',L'),(A,L))$ is represented by a quasi-projective scheme, denoted by $\mathrm{Mor}((A',L'),(A,L))$. To show it is a finite scheme over $k$, it suffices to show that the following set
 \[    \{ \alpha\colon A'\to A \mid \hbox{ $\alpha$ is surjective, and } \alpha^*L \cong L'\}
  \]
consists of finitely many elements. Remark that this set is nonempty by the assumption.

For each surjective morphism $\alpha\colon A'\to A$, we can factor $\alpha$ into an isogeny with a translation as follows:
  $$\alpha:A'\xrightarrow{t_{x}}A'\xrightarrow{i}A$$
where $t_x\colon A'\to A'$ denotes the translation by $x\in A'$.
Since $\alpha^{*}L\cong L'$, the degree of $\alpha$ is a fixed number $d=\frac{L'^n}{L^n}$, where $n=\mathrm{dim}A$. Note that there are only finitely many isogenies $A'\to A$ of degree $d$ (\cite{edixhoven2012abelian}*{Proposition~7.14}). It follows that there are only finitely many choices of $i:  A'\to A$. 

Fix an isogeny $i:A'\to A$, $i^{*}L$ is an ample line bundle on $A'$. By Theorem \ref{pol-av}, the morphism
$$\phi_{i^{*}L}:A'\to \mathrm{Pic}^{0}(A')~,~a'\mapsto t_{a'}^{*}(i^{*}L)\otimes(i^{*}L^{-1})$$
is an isogeny of abelian varieties. As $\phi_{i^*L} (x)= t_x^*(i^*L) \otimes (i^*L^{-1}) \cong L'\otimes (i^{*}L^{-1})$, there are only finitely many $x\in A'$ such that $t_{x}^{*}i^{*}L\cong L'$. In summary, we obtain that there are only finitely many $\alpha\in \mathrm{Mor}(A',A)$ such that $\alpha^*L\cong L'$. And in turn, we conclude that $\mathrm{Mor}((A',L'),(A,L))$ is a finite scheme over $k$.
\end{proof}

\subsection{Albanese morphism and polarization of $K$-trivial varieties}

Recall a theorem of the structure of varieties with trivial canonical divisors.
\begin{thm}{\cite{kawamata1985minimal}*{Theorem 8.3}}\label{thm:pol-isotr} Let $X$ be a normal projective variety having  at most canonical singularities, and let $\mathrm{alb}_{X}:X\to A=\mathrm{Alb}(X)$ be the Albanese morphism. Suppose that $K_{X}\sim_{\mathbb{Q}} 0$. Then $\mathrm{alb}_{X}$ is an \'etale fiber bundle, i.e., there is an isogeny $\tau \colon A'\to A$ of abelian varieties such that:
$$X\times_{A}A'\cong F\times A'$$
where $F$  has  at most canonical singularities and $K_F\sim_{\mathbb{Q}} 0$.
\end{thm}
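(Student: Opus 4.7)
The plan is to combine classical results on the Albanese morphism of $K$-trivial varieties with the polarized trivialization machinery built in Section~2. I would proceed in three steps.

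First, using the standard analysis of the Albanese map for varieties with canonical singularities and $K_X\sim_{\mathbb{Q}}0$ (after resolving and descending, and invoking the Kawamata–Viehweg/generic vanishing circle of ideas), one shows that $\mathrm{alb}_X\colon X\to A$ is surjective, equidimensional, with connected fibers, and that a general fiber $F$ again has at worst canonical singularities with $K_F\sim_{\mathbb{Q}}0$. Combined with the translation-homogeneity of $A$, every closed fiber has the same Hilbert polynomial with respect to any $\mathrm{alb}_X$-ample line bundle $L$ on $X$.

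Second, I would establish that $\mathrm{alb}_X$ is a locally trivial fibration in the analytic topology, in the sense used at the beginning of Section~2. This is the substantive content of the theorem and the main obstacle: one must upgrade the equidimensionality of fibers to the assertion that any two closed fibers are isomorphic as polarized varieties and that these isomorphisms propagate over small analytic polydiscs in $A$. The argument uses a Kodaira–Spencer-type rigidity for families of polarized $K$-trivial varieties over an abelian base, exploiting the homogeneity of $A$ together with $K_{X/A}\sim_{\mathbb{Q}}0$; this is Hodge-theoretic/analytic input that lies outside the formal Isom-functor formalism of Section~2.

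Third, once analytic local triviality is in hand, Proposition~\ref{prop:pol-isotr} applies directly to $\mathrm{alb}_X\colon X\to A$ with the polarization $L$. Since $K_F\sim_{\mathbb{Q}}0$ is pseudo-effective and $(F,L|_F)$ is klt and polarized, Proposition~\ref{finite-auto-pol} gives that $G=\mathrm{Aut}(F,L|_F)$ is a finite group. The final clause of Proposition~\ref{prop:pol-isotr} then yields a finite \'etale cover $I_0\to A$ together with an isomorphism $X\times_A I_0\cong F\times I_0$. Because any connected finite \'etale cover of an abelian variety inherits an abelian variety structure (lift the group law through the universal cover $\mathbb{C}^{\dim A}$), $A':=I_0$ is an abelian variety and the covering map $\tau\colon A'\to A$ is an isogeny, delivering the desired conclusion $X\times_A A'\cong F\times A'$ with $F$ having canonical singularities and $K_F\sim_{\mathbb{Q}}0$.
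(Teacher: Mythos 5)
There is a genuine gap. First, note that the paper does not prove this statement at all: it is quoted verbatim as Kawamata's Theorem~8.3 from \cite{kawamata1985minimal}, so any self-contained proof must actually supply the hard content. Your Step~2 is exactly that content, and it is asserted rather than proved: there is no ready-made ``Kodaira--Spencer-type rigidity for families of polarized $K$-trivial varieties over an abelian base'' that you can invoke. The fact that $K_{X/A}\sim_{\mathbb{Q}}0$ and that $A$ is homogeneous does not by itself force $\mathrm{alb}_X$ to be isotrivial, let alone an analytic fiber bundle; Kawamata's proof of this goes through his Theorems~8.1--8.2, covering tricks, subadditivity of Kodaira dimension and Hodge-theoretic/variation-of-Hodge-structure input, none of which appears in your sketch. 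As written, your argument reduces the theorem to an unproven claim that is essentially equivalent to the theorem itself.

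Second, even if analytic local triviality were granted, your Step~3 does not yet meet the hypothesis of Proposition~\ref{prop:pol-isotr}: that proposition requires every closed fiber to be isomorphic to $(X_0,L_0)$ \emph{as a polarized variety}, whereas a local analytic trivialization $\psi$ only gives $L_0\otimes\psi_t^*L_t^{-1}\in\mathrm{Pic}^0(X_0)$. Upgrading this to a polarized isomorphism is precisely the nontrivial statement $\mathrm{Pol}(\leqslant n-1)$ in Theorem~\ref{thm:ind-pol-auto}, which the paper proves by an induction on dimension that itself takes Kawamata's theorem as input; so if you want to fold that step into your argument you must restructure the induction to avoid circularity. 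Your Step~1 (surjectivity, connected fibers, $K_F\sim_{\mathbb{Q}}0$ and canonical singularities of the general fiber, via \cite{kawamata1981characterization} and adjunction) and the final observation that a connected finite \'etale cover $A'\to A$ carries an abelian variety structure making $\tau$ an isogeny are fine, but they are the routine parts; the core of the theorem remains unproved in your proposal.
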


Following Theorem \ref{thm:pol-isotr}, we see that the action of $H:=\mathrm{ker}(\tau)$ on $A'$ induces naturally an action on the base change $X\times_A A' \cong F\times A'$ such that $X\cong (F\times A')/H$.  In fact $H$ acts on $F\times A'$ diagonally. This has been proved in \cite{xu2020homogeneous} in log setting.
But here to be self-contained and to maintain the information of polarization, we give an independent proof by the use of Isom funcor.

\begin{thm}\label{thm:ind-pol-auto}
Let $X$ be a normal projective variety of dimension $n$ having  at most canonical singularities, and let $\mathrm{alb}_{X}: X\to A=\mathrm{Alb}(X)$ be the Albanese morphism. Suppose that $K_{X}\sim_{\mathbb{Q}} 0$. Then
\begin{itemize}
    \item [(i)] For two ample line bundles $L_1,L_2$ on $X$, if $L_1\otimes L_2^{-1}\in \mathrm{Pic}^0(X)$, then there exists $\sigma\in \mathrm{Aut}^0(X)$ \footnote{$\mathrm{Aut}^0(X)$ is an abelian variety by Ueno \cite[Theorem 14.1]{book:Ueno1975}} such that $\sigma^*L_2 \sim L_1$.
    \item[(ii)] There is an isogeny $\tau \colon A'\to A$ of abelian varieties such that $X\cong A'\times^{H} F$ where $H=ker(\tau)$. Note that $F$ is a normal projective variety having at most canonical singularities and $K_{F}\sim_{\mathbb{Q}}0$. Moreover we may choose the isogeny $\tau:A' \to A$ such that the action of $H$ on $F$ is faithful.
\end{itemize}
\end{thm}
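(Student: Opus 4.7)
The plan is to argue by induction on $\dim X$, proving (ii) first and then deducing (i) from the product structure obtained. The base case is $\dim\mathrm{Alb}(X)=0$, in which $\mathrm{Pic}^0(X)\cong\widehat A=0$, statement (i) is vacuous, and (ii) is trivial with $A'=\mathrm{Spec}\,\mathbb{C}$ and $F=X$. So assume $\dim A>0$; then a general fiber $F$ of $\mathrm{alb}_X$ has $\dim F<\dim X$, and by Theorem~\ref{thm:pol-isotr}, $F$ has canonical singularities with $K_F\sim_{\mathbb Q}0$, so the inductive hypothesis supplies statement (i) for $F$.

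For (ii), Theorem~\ref{thm:pol-isotr} presents $\mathrm{alb}_X\colon X\to A$ as an analytically locally trivial fibration with fiber $F$. Fix an ample line bundle $L$ on $X$ and set $L_a=L|_{X_a}$; these are ample and have constant numerical class on $F\cong X_a$ as $a$ varies. Through any local analytic trivialization, $L_a$ and $L_{a_0}$ are identified as line bundles on $F$ of the same numerical class, hence differ by an element of $\mathrm{Pic}^0(F)$. The inductive hypothesis (i) for $F$ then produces $\sigma\in\mathrm{Aut}^0(F)$ with $\sigma^*L_{a_0}\cong L_a$, giving a polarized isomorphism $(X_a,L_a)\cong(X_{a_0},L_{a_0})$ for every $a$. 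Proposition~\ref{prop:pol-isotr} applies with $T=A$, $X_0=F$, $L_0=L|_F$. Combined with Proposition~\ref{finite-auto-pol}, which ensures finiteness of $G=\mathrm{Aut}(F,L_0)$ (since $K_F\sim_{\mathbb Q}0$ is pseudo-effective and $L_0$ is ample), it yields an étale $H$-cover $\tau\colon A':=I_0\to A$ together with a diagonal presentation $X\cong A'\times^H F$. As $I_0\to A$ is a connected étale cover of the abelian variety $A$, a choice of basepoint over the origin makes $A'$ into an abelian variety and $\tau$ into an isogeny with $\ker\tau=H$. Finally, to arrange the $H$-action on $F$ to be faithful, I let $H_0\leq H$ be the kernel of $H\to\mathrm{Aut}(F)$; since $H_0$ acts on $A'$ purely by translations, the quotient $(A'/H_0,H/H_0)$ gives a new diagonal presentation $X\cong (A'/H_0)\times^{H/H_0}F$ in which the new $H$-action is faithful on $F$.

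For (i), the translation action of $A'$ on the second factor of $A'\times F$ commutes with the diagonal $H$-action (because $H\subset A'$ acts on $A'$ by translations), and therefore descends to an action of $A'$ on $X$. Since $H$ is now faithful on $F$, this descended action is faithful, so $A'$ embeds as a subabelian variety of $\mathrm{Aut}^0(X)$, which is itself an abelian variety by Ueno. Consequently $\dim\mathrm{Aut}^0(X)\geq\dim A'=\dim A=\dim\mathrm{Pic}^0(X)$. The morphism of abelian varieties $\phi_{L_2}\colon\mathrm{Aut}^0(X)\to\mathrm{Pic}^0(X)$, $\sigma\mapsto\sigma^*L_2\otimes L_2^{-1}$, has kernel contained in $\mathrm{Aut}(X;L_2)$, which is finite by Proposition~\ref{finite-auto-pol}; combined with the dimension inequality just established, $\phi_{L_2}$ is surjective. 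Applying surjectivity to $L_1\otimes L_2^{-1}\in\mathrm{Pic}^0(X)$ yields the desired $\sigma\in\mathrm{Aut}^0(X)$ with $\sigma^*L_2\cong L_1$.

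The main obstacle is verifying the polarized-isotriviality hypothesis of Proposition~\ref{prop:pol-isotr}, bridging the gap between the automatic numerical equivalence of fiber polarizations in a locally trivial family and an honest polarized isomorphism of fibers. This is precisely the content of (i) applied to the smaller-dimensional $F$, which explains the natural recursion on $\dim X$. A secondary delicate point is the careful passage from the étale $H$-cover $I_0\to A$ to an honest isogeny of abelian varieties together with the diagonal product structure with faithful $H$-action; for this, Proposition~\ref{prop:pol-isotr} combined with a standard quotient-by-$H_0$ maneuver suffices.
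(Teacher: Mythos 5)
Your argument for part (ii) is essentially the paper's: local analytic triviality of $\mathrm{alb}_X$ from Theorem~\ref{thm:pol-isotr}, the inductive use of (i) on the fiber to upgrade local trivializations to polarized isomorphisms of fibers, and then Proposition~\ref{prop:pol-isotr} together with Proposition~\ref{finite-auto-pol}; your extra quotient-by-$H_0$ manoeuvre is redundant (the group $H$ produced by Proposition~\ref{prop:pol-isotr} is already a subgroup of $\mathrm{Aut}(F,L_0)$, hence acts faithfully on $F$), but harmless. For part (i) you take a genuinely different route. The paper proves surjectivity of $a\mapsto \bar t_a^{*}L\otimes L^{-1}$ on $A'$ directly (Lemma~\ref{diag-pol-trans}), by restricting to a slice $A'\times\{f_0\}$ and combining Mumford's theorem that $\phi_{L}$ is an isogeny (Theorem~\ref{pol-av}) with the isogeny $\mathrm{Pic}^0(X)\to \mathrm{Pic}^0(A'\times\{f_0\})$. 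You instead embed $A'$ into $\mathrm{Aut}^0(X)$ and run a dimension count on $\phi_{L_2}\colon \mathrm{Aut}^0(X)\to \mathrm{Pic}^0(X)$, using finiteness of the polarized stabilizer (Proposition~\ref{finite-auto-pol}) and the fact that $\mathrm{Aut}^0(X)$ is an abelian variety (Ueno). This is slicker in that it avoids Theorem~\ref{pol-av} and the $H^1$ computation, but it promotes Ueno's theorem from a footnote to an essential input and uses two standard facts you should make explicit: that $\phi_{L_2}$ is a homomorphism (equivalently, that the connected group $\mathrm{Aut}^0(X)$ acts trivially on $\mathrm{Pic}^0(X)$) --- alternatively, argue that the fibers of $\phi_{L_2}$ are finite and $\mathrm{Aut}^0(X)$ is complete, so the image is closed of full dimension --- and that $\dim \mathrm{Pic}^0(X)=\dim \mathrm{Alb}(X)$, which holds because canonical singularities are rational.

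One step in (ii) needs patching. You deduce that $L_a$ and $L_{a_0}$, compared through a local trivialization, ``have the same numerical class, hence differ by an element of $\mathrm{Pic}^0(F)$''. Numerical triviality only places the difference in $\mathrm{Pic}^{\tau}(F)$, which may contain torsion classes of $\mathrm{NS}(F)$ that are not algebraically trivial, and the inductive hypothesis (i) requires membership in $\mathrm{Pic}^0(F)$. The correct (and easy) argument, which is the one the paper uses, is that $a\mapsto \psi_a^{*}L_a\otimes L_{a_0}^{-1}$ is a holomorphic family of line bundles on $F$ over a connected base whose value at $a_0$ is $\mathcal{O}_F$, hence it stays in the identity component $\mathrm{Pic}^0(F)$ of the Picard scheme; with that substitution (and the observation that the polarized isomorphism class of the fibers is therefore locally constant, hence constant on the connected base $A$), your proof of (ii) goes through.
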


\begin{proof}
If $q(X) = 0$, then both the two statements are trivial.

Now assume $q(X) >0$. We do induction on the dimension.
Assume the statements hold for lower dimensional varieties with at most canonical singularities and canonical divisor $K\sim_{\mathbb{Q}} 0$. We consider the following statements:

Pol($l$): Statetment (i) for $K$-trivial canonical varieties of dimension $l$.

Diag($l$): Statement (ii) for $K$-trivial canonical varieties of dimension $l$.

We follow the strategy
$$\mathrm{Pol}(\leqslant n-1) \Rightarrow \mathrm{Diag}( n) \Rightarrow \mathrm{Pol}( n).$$
\smallskip

``$\mathrm{Pol}(\leqslant n-1) \Rightarrow \mathrm{Diag}(n)$'':
When $q(X)=\dim(X)$, we have $X\cong \mathrm{Alb}(X)$ by \cite[Corollary 2]
{kawamata1981characterization}, then the assertion (ii) is trivial. Now assume $0<q(X)<\dim(X)$. By Theorem \ref{thm:pol-isotr}, $\mathrm{alb}_{X}\colon X\to A$ is an analytic fiber bundle. Fix a polarization $L$ on $X$.
\smallskip

\textit{Claim}: For any two closed points $t_1, t_2\in A$, the fibers $(X_{t_1}, L_{t_1}:=L|_{X_{t_1}}) \cong (X_{t_2}, L_{t_2}:=L|_{X_{t_2}})$.
 
\textit{Proof of the Claim}: Since $\mathrm{alb}_{X}\colon X\to A$ is an analytic fiber bundle over a connected base, we can verify the statement locally. Fix a closed point $t_{0}\in A$. Take an analytic neighbourhood $U$ of $t_{0}$ in $A$ such that there is an isomorphism $\psi\colon U\times X_{t_{0}}\cong X_{U}$. Restricting $\psi$ to the fiber over a closed point $t\in U$ yields an isomorphism $\psi_{t}\colon X_{t_{0}}\cong X_{t}$. Set $L_{t_{0}}=L|_{X_{t_{0}}}$ and $L_{t}=L|_{X_{t}}$. Consider the holomorphic map $U\to \mathrm{Pic}^{0}(X_{t_{0}})$, $t\mapsto L_{t_{0}}\otimes \psi_{t}^{*}(L^{-1}_{t})$. Then for $t\in U$, $L_{t_{0}}\otimes \psi_{t}^{*}(L^{-1}_{t})\in \mathrm{Pic}^{0}(X_{t_{0}})$. By $\mathrm{Pol}(\leqslant n-1)$, there exists $\sigma\in \mathrm{Aut}^{0}(X_{t_{0}})$ such that $\sigma^{*}\psi^{*}L_{t}\cong L_{t_{0}}$. And we finish the proof of the claim.
 \smallskip
 
Applying Proposition \ref{prop:pol-isotr}, there exists a finite subgroup $H \leq \mathrm{Aut}(X_{t_{0}}, L_{t_{0}})$ and an \'etale $H$-cover $A'\to A$ such that $X\cong A'\times^{H}X_{t_{0}}$.

\medskip
``$\mathrm{Diag}(n) \Rightarrow \mathrm{Pol}(n)$'': By assumption, $X\cong A'\times^{H}F$. Let $\pi\colon A'\times F\to X$ be the quotient morphism.

First, we construct an $A'$-action on $X$. There is a natural action of $A'$ on $A'\times F$ by translations on the first factor. For each $a\in A'$, we denote by $t_{a}$ the translation by $a$. Then we can verify that this action commutes with the $H$-action on $A'\times F$ pointwisely:
for $h\in H, a\in A'$ and $(a',f)\in A'\times F$, 
$$h\circ t_a (a',f)=(a'+a+h,h(f))=t_a\circ h(a',f),$$ 
that is, the following diagram commutes:
$$\xymatrix{&A'\times F\ar[d]^{t_a}\ar[r]^{h}&A'\times F\ar[d]^{t_a}&\\
&A'\times F\ar[r]^{h}&A'\times F.
}$$
Therefore, the action of $t_a$ on $A'\times F$ descends to $\bar{t}_a \in \mathrm{Aut}^{0}(X)$ via the following commutative diagram:
$$\xymatrix{&A'\times F\ar[d]^{t_a}\ar[r]^<(.15){\pi}&X\cong A'\times^{H}F\ar[d]^{\bar{t}_a}&\\
&A'\times F\ar[r]^<(.15){\pi}&X\cong A'\times^{H}F.
}$$
 \smallskip

Next, we prove the following lemma, which is sufficient to deduce $\mathrm{Pol}(n)$.
\smallskip

\begin{lem}\label{diag-pol-trans}
With the above notation, for an ample line bundle $L$ on $X$, the morphism 
$$\bar{\phi}_{L}\colon A'\rightarrow \mathrm{Pic}^0(X),~~a\mapsto \bar{t}_{a}^{*}L\otimes L^{-1}$$ is surjective.
\end{lem} 
\begin{proof}
Fix a closed point $f_{0}\in F$ and denote by $j:A'\times\{f_{0}\}\hookrightarrow A'\times F$ the closed immersion. Then $j^{*}\pi^{*}L$ is an ample line bundle on $A'\times\{f_{0}\}$. By Theorem \ref{pol-av}, the morphism 
$$\phi_{j^{*}\pi^{*}L}\colon A'\rightarrow \mathrm{Pic}^{0}(A'\times\{f_{0}\}),~~a\mapsto t^{*}_{a}(j^{*}\pi^{*}L)\otimes (j^{*}\pi^{*}L^{-1})$$
is an isogeny of abelian varieties.
Since $\pi\circ j:A'\times \{f_{0}\}\to X$ induces an isomorphism $H^{1}(X,\mathcal{O}_{X})\cong H^{1}(A'\times\{f_{0}\},\mathcal{O}_{A'\times\{f_{0}\}})$, the morphism $j^{*}\circ\pi^{*}:\mathrm{Pic}^{0}(X)\rightarrow\mathrm{Pic}^{0}(A'\times\{f_{0}\})$ is an isogeny of abelian varieties. By the construction of the $A'$-action on $X$, we can verify $\phi_{j^{*}\pi^{*}L}=j^{*}\circ\pi^{*}\circ\bar{\phi}_{L}$: for $a\in A'$,
$$\phi_{j^{*}\pi^{*}L}(a)=t^{*}_{a}(j^{*}\pi^{*}L)\otimes (j^{*}\pi^{*}L^{-1})=j^{*}\pi^{*}(\bar{t}^{*}_{a}L)\otimes j^{*}\pi^{*}L^{-1}=j^{*}\circ\pi^{*}\circ\bar{\phi}_{L}(a).$$
That is, the following diagram commutes:
$$\xymatrixrowsep{0.4in}
\xymatrixcolsep{0.5in}
\xymatrix{
A'\ar[dr]_{\phi_{j^{*}\pi^{*}L}}\ar[r]^{\bar{\phi}_{L}}&\mathrm{Pic}^{0}(X)\ar[d]^{j^{*}\circ \pi^{*}}\\
&\mathrm{Pic}^{0}(A'\times \{f_{0}\}).
}$$
As a result, the morphism $\bar{\phi}_{L}:A'\to \mathrm{Pic}^{0}(X)$ is surjective.
\end{proof}

\smallskip

Finally, we show $\mathrm{Pol}(n)$ by use of Lemma \ref{diag-pol-trans}. For two ample line bundles $L_{1}$, $L_{2}$ on $X$ such that $L_{1}\otimes L_{2}^{-1}\in \mathrm{Pic}^{0}(X)$, As the morphism $$\bar{\phi}_{L_{2}}:A'\to \mathrm{Pic}^{0}(X),a\mapsto \bar{t}_{a}^{*}L_{2}\otimes L_{2}^{-1}$$ 
is surjective by Lemma \ref{diag-pol-trans}, there exists $a\in A'$ such that $\bar{t}_{a}^{*}L_{2
}\otimes L_{2}^{-1}\cong L_{1}\otimes L_{2}^{-1}$, thus $\bar{t}_{a}^{*}L_{2
}\cong L_{1}$.

\end{proof}

\subsection{A special isotrivial fibration}
We treat the following important special structure, which will play a key role in our proof.
\begin{prop}\label{prop:isotrivial-prod}
Let $X$ be a quasi-projective variety equipped with two morphisms
$$\xymatrix{ &X \ar[r]^{g}\ar[d]^f & Z\\
&A &
}$$
where $A$ is an abelian variety and
$g\colon X\to Z$ is a smooth fibration. Fix $z_0\in Z$. Assume that
\begin{itemize}
\item[(a)] $X_{z_0} = A'$ is equipped with an abelian variety structure, such that the natural projection $f|_{A'}\colon A' \to A$ is an isogeny of abelian varieties;
\item[(b)] for each closed point $z\in Z$, the fiber $X_z$ is isomorphic to the fixed fiber $X_{z_0}$;
\end{itemize}
Fix an ample line bundle $L_A$ on $A$, set $L = f^*L_{A}$ and $L_{A'}$ the restriction of $L$ on $A'=X_{z_0}$.

Then there exists a finite subgroup $H \leq \mathrm{Aut}(A', L_{A'})$, and a variety $Z'$ equipped with a faithful action of $H$ such that
\begin{itemize}
\item[(i)] $Z \cong Z'/H$, and $X\cong A'\times^{H}Z'$;
\item[(ii)] the projection $(A'\times Z')/H \to Z'/H \cong Z$ coincides with $g\colon X\to Z$; and
\item[(iii)] $f\colon X\to A$ factors through the projection $(A'\times Z')/H \to A'/H$.
\end{itemize}
Remark that the assertion (iii) implies automatically that $H \leq ker(A'\to A)$.
To summarize, we have the following commutative diagram:
$$\xymatrix{ &  &A'\times Z'\ar[ld]_{p_1}\ar[d] \ar[r]^{\cong}&X\times_Z Z'\ar[d]^{q_{1}} \ar[r]^-{g'}  &Z'\ar[d] \\
&A'\ar[d] &(A'\times Z')/H\ar[ld]\ar[d] \ar[r]^-{\cong}& X\ar[r]^>>>>{g}  \ar[ld]^{f} & Z\cong Z'/H\\
&A'/H\ar[r]  &A &  &
}$$
\end{prop}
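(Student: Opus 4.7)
The plan is to apply Proposition \ref{prop:pol-isotr} to the smooth fibration $g\colon X\to Z$ polarized by $L=f^*L_A$, which will produce the $H$-cover $Z'\to Z$ and the isomorphism $X\cong A'\times^H Z'$. The second morphism $f$ will then be rigidified using Lemma \ref{lem:pol-mor-finite}: the polarized fiber trivializations supplied by $Z'$ convert $f$ into a family of morphisms $A'\to A$ with constant pullback of $L_A$, and Lemma \ref{lem:pol-mor-finite} will force this family to be constant.

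I first check the input for Proposition \ref{prop:pol-isotr}. Since $g$ is proper and smooth with all fibers isomorphic to $A'$, the Fischer--Grauert theorem yields an analytic local trivialization of $g$. On any local trivialization $X_U\cong A'\times U$ with $U$ connected, the restriction $L|_{X_U}$ gives a family of line bundles on $A'$ whose numerical class is locally constant; since at $z_0$ it equals $[L_{A'}]$, the same holds for every $z\in U$. Because $L_{A'}=(f|_{A'})^*L_A$ is ample, $L|_{X_z}$ is ample for every $z$ and $L$ is $g$-ample. Moreover, the local trivialization identifies $L|_{X_z}$ with an ample line bundle on $A'$ numerically equivalent to $L_{A'}$, so Theorem \ref{thm:ind-pol-auto}(i) applied to $A'$ provides a translation in $\mathrm{Aut}^0(A')$ matching them, producing a polarized isomorphism $(X_z,L_z)\cong(A',L_{A'})$. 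Combined with the finiteness of $\mathrm{Aut}(A',L_{A'})$ (Proposition \ref{finite-auto-pol}), Proposition \ref{prop:pol-isotr} then yields a finite subgroup $H\leq \mathrm{Aut}(A',L_{A'})$ and an \'etale $H$-cover $Z':=I_0\to Z$ with $X\cong A'\times^H Z'$; the natural map $A'\times Z'\to X$ is the evaluation $(a',z')\mapsto \sigma_{z'}^{-1}(a')$, where $\sigma_{z'}\colon X_z\xrightarrow{\sim}A'$ is the polarized isomorphism encoded by $z'\in Z'$. This establishes (i) and (ii).

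For (iii), I examine the composites
\[
\psi_{z'}\;:=\;f\circ\sigma_{z'}^{-1}\colon A'\longrightarrow A,\qquad z'\in Z'.
\]
Compatibility of polarizations gives $\psi_{z'}^*L_A\cong L_{A'}$; surjectivity of $\psi_{z'}$ follows from ampleness of $L_{A'}$, since otherwise $L_{A'}$ would be trivial on the positive-dimensional fibers of $\psi_{z'}$. Hence each $\psi_{z'}$ satisfies the hypotheses of Lemma \ref{lem:pol-mor-finite}, which guarantees that only finitely many such morphisms exist. Therefore $z'\mapsto\psi_{z'}$ is locally constant on the connected cover $Z'=I_0$, hence constant; evaluating at the distinguished point $z_0'\in Z'$ corresponding to $\mathrm{id}_{A'}$ yields $\psi_{z'}\equiv f|_{A'}$. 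Consequently the composite $A'\times Z'\to X\xrightarrow{f}A$ coincides with $f|_{A'}\circ\mathrm{pr}_1$; the $H$-invariance of this composite forces $f|_{A'}(ha')=f|_{A'}(a')$ for every $h\in H$, $a'\in A'$, so $H\leq\ker(f|_{A'})$ and $f$ descends as $(A'\times Z')/H\to A'/H\to A$, proving (iii).

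The principal obstacle is the coordination of two trivializations. Proposition \ref{prop:pol-isotr} rigidifies the polarized pair $(X,L)$ along $Z$ with no reference to $f$, whereas Lemma \ref{lem:pol-mor-finite} constrains $f$ only at the level of a single polarized morphism between abelian varieties. The identity $\psi_{z'}^*L_A\cong L_{A'}$ is the bridge: it converts the a priori $Z'$-varying data about $f$ into a finite combinatorial constraint, which the connectedness of $Z'=I_0$ collapses onto the single morphism $f|_{A'}$, forcing $H$ to act through $\ker(f|_{A'})$ and producing the desired factorization.
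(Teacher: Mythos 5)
Your proposal is correct and follows essentially the same route as the paper: establish fiberwise polarized isomorphisms $(X_z,L|_{X_z})\cong(A',L_{A'})$ via Theorem \ref{thm:ind-pol-auto}, apply Proposition \ref{prop:pol-isotr} to obtain $H$, $Z'$ and parts (i)--(ii), and then use the finiteness of $\mathrm{Mor}((A',L_{A'}),(A,L_A))$ from Lemma \ref{lem:pol-mor-finite} together with the connectedness of $Z'$ to force the induced family of polarized morphisms $A'\to A$ to be constant, giving (iii). The paper packages your fiberwise argument as a single morphism $\phi\colon Z'\to \mathrm{Mor}((A',L_{A'}),(A,L_A))$ induced by the composite $A'\times Z'\to X\xrightarrow{f}A$, which is the precise justification for your ``locally constant, hence constant'' step, but the substance is identical.
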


\begin{proof}
By Theorem \ref{thm:ind-pol-auto}, we have $(X_{z},L|_{X_{z}})\cong (A',L_{A'})$ for each closed point $z\in Z$. Applying Proposition \ref{prop:pol-isotr}, there exists a finite subgroup $H \leq \mathrm{Aut}(A', L_{A'})$ and an \'etale $H$-cover $Z'\to Z$ 
satisfying (i) and (ii). By construction, actually we have a polarized isomorphism over $Z'$ as follows:
$$\beta\colon (A'\times Z',L_{A'}\times Z')\xrightarrow{\sim}(X\times_{Z}Z',q_{1}^{*}L).$$
Let $\psi$ be the composition of polarized morphisms as follows:
$$\psi \colon (A'\times Z',L_{A'}\times Z')\xrightarrow[\cong]{\beta}(X\times_{Z}Z',q_{1}^{*}L)\xrightarrow{q_{1}}(X,L)\xrightarrow{f}(A,L_{A}).$$
Then $\psi$ induces a morphism: $$\phi\colon Z'\to \mathrm{Mor}((A',L_{A'}),(A,L_{A})).$$
Since $\mathrm{Mor}((A',L_{A'}),(A,L_{A}))$ is a finite scheme by Lemma \ref{lem:pol-mor-finite} and $Z'$ is connected, we conclude that $\phi$ is a constant morphism, thus $A'\times Z'\to A$ factors through $p_{1}\colon A'\times Z'\to A'$, as a result $f$ factors through $A'/H$.
\end{proof}

\section{Proof of The Main Results}

\begin{proof}[Proof of Main Theorem \ref{thm:structure}] Let $X\xrightarrow{g}A\xrightarrow{\tau}B$ be the Stein factorization of $h\colon X\to B$. Since $h|_{F}\colon F\to B$ is surjective, $A$ is an abelian variety and $\tau$ is \'etale by Theorem \ref{thm:pol-isotr}.

Note that even though $f\colon X\to S$ and $g\colon X\to A$ are fibrations, the product morphism $f\times g\colon X\to S\times A$ does not necessarily has connected fibers. Let $X\to Y\to S\times A$ be the Stein factorization of $f\times g\colon X\to S\times A$. Let $S^{\circ}\subseteq S$ be an open subset such that $Y^{\circ}:=Y\times_{S}S^{\circ}\to S^{\circ}$ is a smooth fibration. Denote $X^{\circ}:=X\times_{S}S^{\circ}$. For a closed point $s\in S^{\circ}$, since $X_{ s}\to Y_{s}\to \{s\}\times A$ is the Stein factorization of $g\big|_{X_{s}}\colon X_{s}\to A$, $Y_s$ is isomorphic to an abelian variety. In particular, by Theorem \ref{thm:pol-isotr}, we have
\begin{itemize}
 \item [($\sharp$)] $X_{s}\to Y_{s}$ is an isotrivial fibration.
\end{itemize}
\medskip

We shall do a sequence of base changes to attain an isogeny $A'\to A$ such that $X\times_A A'$ splits. We break the arguments into four steps.

\medskip

\par\textbf{Step 1}. In this step, we aim to find an isogeny of abelian varieties $A_{1}\to A$ such that $Y^{\circ}\times_{A}A_{1}\cong S_{1}^{\circ}\times A_{1}$ and build the following base change diagram:
$$\xymatrixcolsep{0.6in}
\xymatrix{&X_{1}^{\circ}\ar[d]\ar[r]^-{f_{1}\times g_{1}}&S^{\circ}_{1}\times A_{1}\ar[d]\ar[r]&S^{\circ}\times A_{1}\ar[d]\\
&X^{\circ}\ar[r]&Y^{\circ}\ar[r]&S^{\circ}\times A\\
}$$
where $f_{1}:X_{1}^{\circ}\to S_{1}^{\circ}$ and $g_{1}:X_{1}^{\circ}\to A_{1}$ are two base changes of $f:X^{\circ}\to S^{\circ}$ and $g:X^{\circ}\to A$ respectively.\smallskip
\\
\textit{Proof of Step 1}: Since there are at most countably many abelian varieties isogenous to $A$, $Y^{\circ}\to S^{\circ}$ is a locally trivial fibration of abelian varieties. Fix a closed point $s_{0}\in S^{\circ}$. Let $A_{1}=Y_{s_{0}}$ and equip $A_{1}$ with an abelian variety structure such that $A_{1}\to A$ is an isogeny of abelian varieties. Applying Proposition \ref{prop:isotrivial-prod}, if setting $H_{1}=\mathrm{ker}(A_{1}\to A)$, there exists an \'etale $H_{1}$-cover $S_{1}^{\circ}\to S^{\circ}$ such that $Y^{\circ}\cong S^{\circ}_{1}\times ^{H_{1}}A_{1}$. Meanwhile, we have $Y_{1}^{\circ}:=Y^{\circ}\times_{A}A_{1}\cong S^{\circ}_{1}\times A_{1}\cong Y^{\circ}\times_{S^{\circ}}S_{1}^{\circ}$.
More precisely, we have the following commutative diagram of base changes:
$$\xymatrixcolsep{0.5in}
\xymatrixrowsep{0.5in}
\xymatrix{&&&X_{1}^{\circ}=X^{\circ}\times_{A}A_{1}\ar[d]^{g_{1}}\ar@/^1.5pc/[ldd]^{f_{1}}\ar[dr]\ar[dl]_{\gamma}&\\
&&S_{1}^{\circ}\times A_{1}\cong Y_{1}^{\circ}\ar[dr]\ar[r]\ar[d]&A_{1}\ar[dr]&X^{\circ}\ar[d]^{g}\ar[dl]\ar@/^1.5pc/[ldd]^{f}&\\
&&S_{1}^{\circ}\ar[dr]&Y^{\circ}\ar[r]\ar[d]&A\\
&&&S^{\circ}&\\
}$$
where $f_{1}:X_{1}^{\circ}\to S_{1}^{\circ}$ and $g_{1}:X_{1}^{\circ}\to A_{1}$ are two base changes of $f:X^{\circ}\to S^{\circ}$ and $g:X^{\circ}\to A$ respectively, and $\gamma: X_{1}^{\circ}\to Y_{1}^{\circ}$ is the base change of $X^{\circ}\to Y^{\circ}$. 

\smallskip

Under the isomorphism of $Y_{1}^{\circ}:=Y^{\circ}\times_{A}A_{1}\cong S^{\circ}_{1}\times A_{1}\cong Y^{\circ}\times_{S^{\circ}}S_{1}^{\circ}$, and by the universal property of fiber product, we see that the projections of $S^{\circ}_{1}\times A_{1}$ to $S_{1}^{\circ}$ and $A_{1}$ coincide with $Y_{1}^{\circ}\to S^{\circ}$ and $Y_{1}^{\circ}\to A$ respectively. In turn we see that the morphism $X_{1}^{\circ}\xrightarrow{\gamma} Y_{1}^{\circ}\xrightarrow{\sim}S_{1}^{\circ}\times A_{1}$ coincides with the product morphism $f_{1}\times g_{1}:X_{1}^{\circ}\to S_{1}^{\circ}\times A_{1}$. In particular, $f_{1}\times g_{1}\colon X^{\circ}_{1}\to S^{\circ}_{1}\times A_{1}$ is a fibration.
\medskip
\begin{notation}
    Let $L$ be an $f_{1}\times g_{1}$-ample line bundle on $X^{\circ}_{1}$. Let $X^{\circ}_{a_{0}}:=g_{1}^{-1}(a_{0})$ be the fiber of $g_{1}\colon X_{1}^{\circ}\to A_{1}$ over a closed point $a_{0}\in A_{1}$. And let 
$$I:=\mathrm{Isom}_{S^{\circ}_{1}\times A_{1}}((X^{\circ}_{1},L),(X^{\circ}_{a_{0}}\times A_{1},L|_{X^{\circ}_{a_{0}}} \times A_{1})),$$
which is a quasi-projective scheme over $S^{\circ}_{1}\times A_{1}$. Let $I\to {S'_{1}}^{\circ} \to S^{\circ}_{1}$ be the Stein factorization of $I\to S^{\circ}_{1}$. The fiber of $I\to {S'_{1}}^{\circ}$ over a closed point $s'\in {S'_{1}}^{\circ}$ is:
$$I_{s'}=\mathrm{Isom}_{\{s'\}\times A_{1}}(((X^{\circ}_{1})_{s'},L|_{(X^{\circ}_{1})_{s'}}),((X^{\circ}_{a_{0},s'}\times A_{1},L|_{X^{\circ}_{a_{0},s'}} \times A_{1})).$$
Granted with Condition ($\sharp$) and Theorem \ref{thm:pol-isotr}, for each closed point $s'\in {S'_{1}}^{\circ}$, $I_{s'}\to A_{1}$ is \'etale and $I_{s'}$ is isomorphic to an abelian variety. It follows that $I\to {S'_{1}}^{\circ}$ is a locally trivial family of abelian varieties over ${S'_{1}}^{\circ}$.
\end{notation}
\medskip
\par \textbf{Step 2}. In this step, we aim to find an isogeny of abelian varieties $A_{2}\to A_{1}$ and a Galois cover $S_{2}^{\circ}\to S_{1}^{\circ}$, so that there is a polarized isomorphism over $S_{2}^{\circ}\times A_{2}$ as follows: $$\beta\colon((X^{\circ}_{1}\times_{A_{1}}A_{2})\times_{S_{1}^{\circ}}S_{2}^{\circ},(L\times_{A_{1}}A_{2})\times_{S_{1}^{\circ}}S_{2}^{\circ})
\xrightarrow{\sim}
((X^{\circ}_{a_{0}}\times_{S^{\circ}_{1}}S^{\circ}_{2}),(L|_{X^{\circ}_{a_{0}}}\times_{S^{\circ}_{1}}S^{\circ}_{2}))\times A_{2},$$
that is, we obtain the following commutative diagram:
$$\xymatrix{(X^{\circ}_{a_{0}}\times_{S^{\circ}_{1}}S^{\circ}_{2})\times A_{2}&
(X^{\circ}_{1}\times_{A_{1}}A_{2})\times_{S_{1}^{\circ}}S_{2}^{\circ} \ar[r]\ar[dd]\ar[l]_{\beta}
^{\cong}& X^{\circ}_{1}\times_{A_{1}}A_{2}\ar[r]\ar[d] & A_{2}\ar[d] \\
	&& X^{\circ}_{1}\ar[r]^{g_{1}}\ar[d]^{f_{1}} & A_{1} \\
	&S^{\circ}_{2}\ar[r] & S^{\circ}_{1}
 }$$
\textit{Proof of Step 2}: Recall that $I\to {S'_{1}}^{\circ}$ is a locally trivial family of abelian varieties. Fix a closed point $s'_{0}\in {S'_{1}}^{\circ}$. Let $A_{2}=I_{s'_{0}}$ and equip $A_{2}$ with an abelian variety structure such that $A_{2}\to A_{1}$ is an isogeny. Fix an ample line bundle $L_{A_{2}}$ on $A_{2}$. Applying Proposition \ref{prop:isotrivial-prod}, there exists a subgroup $H_{2}\leq \mathrm{Aut}(A_{2},L_{A_{2}})$ and an \'etale $H_{2}$-cover $S^{\circ}_{2}\to {S'_{1}}^{\circ}$ such that $I\cong S^{\circ}_{2}\times^{H_{2}}A_{2}$. By the construction, we have the following commutative diagram:
$$\xymatrix{ &  &S_{2}^{\circ}\times A_{2}\ar[ld]_{pr_2}\ar[d] \ar[r]^{\cong}&I\times_{{S'_{1}}^{\circ}} S_{2}^{\circ}\ar[d]\ar[r] &S_{2}^{\circ}\ar[d] \\
&A_{2}\ar[d] &(S_{2}^{\circ}\times A_{2})/H_{2}\ar[ld]\ar[d] \ar[r]^>>>>>>>>{\cong}& I\ar[r]\ar[ld]\ar[rd] & {S'_{1}}^{\circ}\cong S_{2}^{\circ}/H_{2}\ar[d]\\
&A'_{1}=A_{2}/H_{2}\ar[r]  &A_{1} &  &S_{1}^{\circ}~~~.
}$$
Note that the morphism $I\to A_{1}$ factors through $I\to A'_{1}=A_{2}/H_{2}$ and each closed fiber of $I\to A'_{1}$ is isomorphic to $S_{2}^{\circ}$. From this we conclude that $S_{2}^{\circ}\times A_{2}\cong I\times_{A_{1}'}A_{2}$, which is an isomorphism over $S^{\circ}_{1}\times A_{2}$. Since $I\times_{A'_{1}}A_{2}$ is a connected component of $I\times_{A_{1}}A_{2}$, the composition morphism
$$\alpha:S^{\circ}_{2}\times A_{2}\cong I\times_{A_{1}'}A_{2}\hookrightarrow I\times_{A_{1}}A_{2}.$$
is a morphism over $S^{\circ}_{1}\times A_{2}$. 
\smallskip
\\By Proposition \ref{prop:bc-isom-fct}, the scheme
$I\times_{A_{1}}A_{2}$ represents the Isom functor $$\mathrm{\textbf{Isom}}_{S^{\circ}_{1}\times A_{2}}((X_1^{\circ}\times_{A_{1}}A_{2},L\times_{A_{1}}A_{2}),(X^{\circ}_{a_{0}}\times A_{2},L|_{X^{\circ}_{a_{0}}}\times A_{2}))(-).$$
The morphism $\alpha \colon S^{\circ}_{2}\times A_{2}\to I\times_{A_{1}}A_{2}$ gives rise to an element
$$\beta\in\mathrm{\textbf{Isom}}_{S^{\circ}_{1}\times A_{2}}((X^{\circ}_{1}\times_{A_{1}}A_{2},L\times_{A_{1}}A_{2}),(X^{\circ}_{a_{0}}\times A_{2},L|_{X^{\circ}_{a_{0}}}\times A_{2}))(S^{\circ}_{2}\times A_{2}),$$
that is, $\beta$ is a polarized isomorphism over $S_{2}^{\circ}\times A_{2}$ as follows:
$$\beta\colon((X^{\circ}_{1}\times_{A_{1}}A_{2})\times_{S_{1}^{\circ}}S_{2}^{\circ},(L\times_{A_{1}}A_{2})\times_{S_{1}^{\circ}}S_{2}^{\circ})
\xrightarrow{\sim}
((X^{\circ}_{a_{0}}\times_{S^{\circ}_{1}}S^{\circ}_{2}),(L|_{X^{\circ}_{a_{0}}}\times_{S^{\circ}_{1}}S^{\circ}_{2}))\times A_{2}.$$

\smallskip

In addition, if we take a further base change $\tilde{S_{2}^{\circ}}\to S_{2}^{\circ}$ over $S_{1}^{\circ}$, then the composition morphism $$\tilde{\alpha}:\tilde{S_{2}^{\circ}}\times A_{2}\to S^{\circ}_{2}\times A_{2}\xrightarrow{\alpha} I\times_{A_{1}}A_{2}$$ is also a morphism over  $S_{1}^{\circ}\times A_{2}$, in turn we also have a polarized isomorphism over $\tilde{S_{2}^{\circ}}\times A_{2}$. So we can replace $S^{\circ}_{2}$ by its Galois closure over $S^{\circ}_{1}$ to assume that $S^{\circ}_{2}\to S^{\circ}_{1}$ is a Galois cover.

\medskip

\textbf{Step 3}. In this step, we aim to show $X^{\circ}_{1}\times_{A_{1}}A_{2}\cong X^{\circ}_{a_{0}}\times A_{2}$.\medskip\\
\textit{Proof of Step 3}: Let $G:=\mathrm{Gal}(S^{\circ}_{2}/S^{\circ}_{1})$. 
Then there is an action of $G$ on $(X^{\circ}_{1}\times_{A_{1}}A_{2})\times_{S_{1}^{\circ}}S_{2}^{\circ}$ by base changes, which preserves the polarization $(L\times_{A_{1}}A_{2})\times_{S_{1}^{\circ}}S_{2}^{\circ}$. Since the morphism $f_{1}\times g_{1}:X_{1}^{\circ}\to S_{1}^{\circ}\times A_{1}$ is a fibration, the action of $G$ preserves the fibers of the projection $(X^{\circ}_{1}\times_{A_{1}}A_{2})\times_{S_{1}^{\circ}}S_{2}^{\circ}\to A_{2}$.
More precisely, for $g\in G$, we have the following commutative diagram: 
{\small $$\xymatrixcolsep{0.05in}
\xymatrix{(X^{\circ}_{1}\times_{A_{1}}A_{2})\times_{S_{1}^{\circ}}S_{2}^{\circ},(L\times_{A_{1}}A_{2})\times_{S_{1}^{\circ}}S_{2}^{\circ})\ar[rr]^{g}_{\cong}\ar[d]&&(X^{\circ}_{1}\times_{A_{1}}A_{2})\times_{S_{1}^{\circ}}S_{2}^{\circ},(L\times_{A_{1}}A_{2})\times_{S_{1}^{\circ}}S_{2}^{\circ})\ar[d]\\
S_{2}^{\circ}\times A_{2}\ar[rr]^{(g,\mathrm{id})}_{\cong}\ar[dr]&&S_{2}^{\circ}\times A_{2}\ar[dl]\\
&S_{1}^{\circ}\times A_{2}&}$$}
Remind that in Step 2 we have constructed a polarized isomorphism over $S_{2}^{\circ}\times A_{2}$:
$$\beta\colon((X^{\circ}_{1}\times_{A_{1}}A_{2})\times_{S_{1}^{\circ}}S_{2}^{\circ},(L\times_{A_{1}}A_{2})\times_{S_{1}^{\circ}}S_{2}^{\circ})
\xrightarrow{\sim}
((X^{\circ}_{a_{0}}\times_{S^{\circ}_{1}}S^{\circ}_{2}),(L|_{X^{\circ}_{a_{0}}}\times_{S^{\circ}_{1}}S^{\circ}_{2}))\times A_{2}.$$
In turn, we obtain an action of $G$ on $(X^{\circ}_{a_{0}}\times_{S^{\circ}_{1}}S^{\circ}_{2})\times A_{2}$ such that $\beta$ is $G$-equivariant:
\begin{align*}
    G\times ((X^{\circ}_{a_{0}}\times_{S^{\circ}_{1}}S^{\circ}_{2})\times A_{2},(L|_{X^{\circ}_{a_{0}}}\times_{S^{\circ}_{1}}S^{\circ}_{2})\times A_{2})&\to((X^{\circ}_{a_{0}}\times_{S^{\circ}_{1}}S^{\circ}_{2})\times A_{2},(L|_{X^{\circ}_{a_{0}}}\times_{S^{\circ}_{1}}S^{\circ}_{2})\times A_{2}),\notag\\
    (g,(x,a_{2}))&\mapsto(g_{a_{2}}(x),a_{2})\notag
\end{align*}
here $g_{a_{2}}$ is a polarized automorphism of $((X^{\circ}_{a_{0}}\times_{S^{\circ}_{1}}S^{\circ}_{2})\times \{a_{2}\},(L|_{X^{\circ}_{a_{0}}}\times_{S^{\circ}_{1}}S^{\circ}_{2})\times \{a_{2}\})$ over $S_{1}^{\circ}$. This action induces a morphism $$\eta:A_{2}\to \mathrm{Hom}_{gp}\bigg(G,\mathrm{Aut}_{S_{1}^{\circ}}(X^{\circ}_{a_{0}}\times_{S^{\circ}_{1}}S^{\circ}_{2},L|_{X^{\circ}_{a_{0}}}\times_{S^{\circ}_{1}}S^{\circ}_{2})\bigg).$$
Since $|G|<\infty$ and $\mathrm{Aut}_{S_{1}^{\circ}}(X^{\circ}_{a_{0}}\times_{S^{\circ}_{1}}S^{\circ}_{2},L|_{X^{\circ}_{a_{0}}}\times_{S^{\circ}_{1}}S^{\circ}_{2})$ is a group scheme over $k$, we see that $\eta$ is constant, that is, the action of $G$ on $(X^{\circ}_{a_{0}}\times_{S_{1}^{\circ}}S_{2}^{\circ})\times A_{2}$ is independent of $a_{2}\in A_{2}$. From this we conclude that $X_{1}^{\circ}\times A_{2}\cong ((X^{\circ}_{a_{0}}\times_{S_{1}^{\circ}}S_{2}^{\circ})\times A_{2})/G\cong X^{\circ}_{a_{0}}\times A_{2}$.

\medskip

\par \textbf{Step 4}. Let $X'=X\times_{A}A_{2}$ which is \'etale over $X$, then $X'$ is again $\Q$-factorial and has canonical singularities, and $K_{X'}$ is semi-ample. By (the paragraph after) \cite[Corollary 1.4.3]{birkar2010existence}, we have the terminalization morphism $\eta\colon \widetilde{X}\to X'$ such that $\eta$ is a crepant morphism, i.e., $K_{\widetilde{X}}=\eta^*K_{X'}$, $\widetilde{X}$ is terminal and $\Q$-factorial. Also, since $K_{X'}$ is semi-ample, so is $K_{\widetilde{X}}$. Then $\widetilde{X}$ is a good minimal model of $X'$.

By Step 3, we have a birational map $\widetilde{X}\stackrel{bir}\dashrightarrow X^{\circ}_{a_{0}}\times A_{2}$. Take a projectivization and desingularization $Y$ of $X^{\circ}_{a_{0}}$. Then $Y\times A_{2}$ has good minimal model $\widetilde{X}$. By \cite[Proposition 2.5]{Lai2011varsfiberedbygoodminimalmodel}, any directional MMP of $Y\times A_{2}$ teminates. Also, since $A_2$ is an abelian variety, each step (extremal contractions and flips) of the MMP of $Y\times A_2$ reduces to a MMP step of $Y$. Therefore, we get a minimal model $\widetilde{Y}$ of $Y$ such that $\widetilde{Y}\times A_2$ is another minimal model of $X'$. By \cite[Theorem 1]{kawamata2008flops}, the birational map $\widetilde{X}\stackrel{bir}\dashrightarrow \widetilde{Y}\times A_{2}$ is a composition of flops. Again since $A_2$ is an abelian variety, the flops starting from $\widetilde{Y}\times A_{2}$ arise from flops of $\widetilde{Y}$. Therefore, we get $\widetilde{X}\cong Z\times A_2$ with $Z$ a minimal model of $X_{a_{0}}^\circ$. Hence we have the following commutative diagram

$$\xymatrixrowsep{0.6in}
\xymatrix{
\widetilde{X}\ar[d]_{\eta}\ar[r]^-{\cong}&Z\times A_2\ar[d]^{p_2}\\
X'\ar[r]&A_{2}
}$$
Since $K_{\widetilde{X}}=\eta^*K_{X'}$, $\eta$ is the contraction of a $K_{\widetilde{X}}$-trivial face. By the above commutative diagram, for any curve $C\subset \widetilde{X}\cong Z\times A_2$ contracted by $\eta$, $p_2(C)$ is a point. Hence the union of the curves in the numerical classes of the aforementioned $K_{\widetilde{X}}$-trivial face form a trivial fiber bundle over $A_2$. Hence the contraction $\eta$ arises from a contraction of $Z$. Therefore, $X'\cong Z'\times A_2$. Since the projection $X'\to A_2$ comes from the \'etale base change along $A_2\to A$, we get that the morphism $g\colon X\to A$ is a fiber bundle.

\end{proof}
 \medskip

\begin{proof}[Proof of Main Theorem \ref{thm:main2}]

$(ii)\Rightarrow (i)$ is trivial. We will show $(i)\Rightarrow (ii)$. Consider the Albanese morphism $\mathrm{alb}_{X}$ and Iitaka fibration $f$ of $X$:
$$\xymatrixcolsep{0.5in}
\xymatrixrowsep{0.5in}
\xymatrix{
&X\ar[d]_{f}\ar[r]^{\mathrm{alb}_{X}}&A_{X}&\\
&S& &
}$$
Since $X$ admits a holomorphic 1-form $\omega$ without zeros,
 then by \cite[Theorem 2.1]{popa2014kodaira}, $\mathrm{alb}_{X}$ maps a general fiber $F$ of $f$ to a translation of some fixed positive dimensional abelian subvariety $B_{0}$ of $A_{X}$, i.e., $\mathrm{alb}_{X}(F)=B_{0}+t_{F}\subset A_{X}$, for some $t_{F}\in A_{X}$ depending on $F$. Taking the dual of the injection $B_{0}\rightarrow A_{X}$, we have the following commutative diagram:
$$\xymatrixrowsep{0.5in}
\xymatrixcolsep{0.5in}
\xymatrix{
&F\ar[r]^{\mathrm{alb}_{X}|_{F}}\ar@{^(-_>}[d]&B_{0}+t_{F}\ar@{^(-_>}[d]\ar[rrd]^{\gamma}&\\
&X\ar@/_2pc/[rrr]_{h}\ar[r]^{\mathrm{alb}_{X}}&A_{X}\ar[r]&A_{X}^{\vee}\ar[r]&B_{0}^{\vee}
}$$
where $\gamma$ is a finite \'etale cover, then $h\big|_F$ is surjective.
Let $X\xrightarrow{g}A\xrightarrow{\sigma}B^{\vee}_{0}$ be the Stein factorization of $h$. Then by Theorem \ref{thm:structure}, $A$ is an abelian variety and $g\colon X\to A$ is a fiber bundle.

Now we show $\dim A=\dim B_0\geq g$. Since $\mathrm{alb}_{X}$ maps the fibers of $f$ onto translations of $B_0$ and $S$ has rational singularties, we have the following commutative diagram:
$$\xymatrixrowsep{0.5in}
\xymatrixcolsep{0.5in}
\xymatrix{
X\ar[d]_f\ar[r]^{\mathrm{alb}_{X}}&A_{X}\ar[d]^p\\
S\ar[r]^{\eta}&A_X/B_0~.&
}$$
Then again by \cite[Theorem 2.1]{popa2014kodaira}, for any holomorphic 1-form $\omega$ on $A_X/B_0$, $\mathrm{alb}_X^*p^*\omega$ has zero. If $\dim B_0<g$, then $\dim A_X/B_0> \dim A_X-g$, which contradicts to the assumption that $X$ admits  $g$ pointwise linearly independent holomorphic 1-forms.
\end{proof}

\bibliographystyle{amsalpha}

\end{document}